\documentclass[11pt, reqno]{amsart}

\usepackage{amsmath,amsopn,amssymb,amsfonts,stmaryrd,amsthm}
\usepackage{amsrefs}
\usepackage{graphicx}
 \usepackage{hyperref}
\usepackage{enumitem}
\usepackage{mathtools}
\usepackage{verbatim}
\unitlength = 1mm

\textwidth 6.3in

\pagestyle{myheadings}
\textheight=7 true in \textwidth=6 true in  
\hoffset=-0.5true in

\newcommand{\IR}{{\mathbb R}}
\newcommand{\IZ}{{\mathbb Z}}
\newcommand{\IN}{{\mathbb N}}
\newcommand{\IH}{{\mathbb H}}

\renewcommand{\O}{\mathrm{O}}



\theoremstyle{plain}
\newtheorem{thm}{Theorem}[section]

\newtheorem{lem}[thm]{Lemma}
\newtheorem{prop}[thm]{Proposition}

\theoremstyle{definition}
\newtheorem{defn}[thm]{Definition}
\newtheorem*{rem}{Remark}

\newcommand{\pmat}[1]{\left( \smallmatrix #1 \endsmallmatrix \right)}
\newcommand{\mat}[1]{\left( \begin{matrix} #1 \end{matrix} \right)}

\def\lp{\left(}
\def\rp{\right)}
\def\lb{\left[}
\def\rb{\right]}

\def\a{\alpha}
\def\b{\beta}
\def\d{\delta}

\def\l{\lambda}

\def\th{\theta}

\def\e{\epsilon}

\def\t{\tau}

\def\DD{\Delta}
\def\LL{\Lambda}

\def\del{\, \partial}

\def\ddd{\mathrm{d}}

\def\wt{\widetilde}
\def\wh{\widehat}

\newcommand{\andd}{\quad \mbox{ and } \quad}

\setlist[itemize]{noitemsep, topsep=0pt}


\newcommand{\IM}{\mathcal{M}}
\newcommand{\IW}{\mathcal{W}}
\newcommand{\tp}{T}
\newcommand{\up}{{u^\prime}}
\newcommand{\sign}[1]{\mathrm{sign} \left( #1 \right)}
\newcommand{\signb}[1]{\mathrm{sign} \left[ #1 \right]}
\newcommand{\abs}[1]{\left\lvert #1 \right\rvert}
\newcommand{\norm}[1]{\left\lVert #1 \right\rVert}
\newcommand{\prd}{\prod}
\newcommand{\sbst}{\subseteq}
\newcommand{\rr}{\lb r \rb}
\newcommand{\st}{:\ }
\newcommand{\pv}{\mathrm{PV}}
\newcommand{\Si}{{S \cup \{ j \} }}
\newcommand{\ws}{{\wh{S} }}
\renewcommand{\equiv}{\coloneqq }

\title{$r$-tuple Error Functions and Indefinite Theta Series of Higher-Depth}

\author{Caner Nazaroglu}
\address{Enrico Fermi Institute \\ University of Chicago \\
5620 Ellis Av. \\ Chicago Illinois 60637, USA}
\email{cnazaroglu@uchicago.edu}

\thanks{The author acknowledges and thanks the support of NSF grant 1520748. Any opinions, findings, and conclusions or recommendations expressed in this material are those of the author and do not necessarily reflect the views of the National Science Foundation.}
\thanks{Preprint: EFI-16-23, arXiv:1609.01224}
\keywords{Generalized error functions, indefinite theta series, modular forms, mock modular forms}
\subjclass[2010]{11F27 (primary), and 11F50 (secondary)}
\begin{document}
\begin{abstract}
Theta functions for definite signature lattices constitute a rich source of modular forms. A natural question is then their generalization to indefinite signature lattices. One way to ensure a convergent theta series while keeping the holomorphicity property of definite signature theta series is to restrict the sum over lattice points to a proper subset. Although such series do not generally have the modular properties that a definite signature theta function has, as shown by Zwegers \cite{zwegers2008mock} for signature $(1,n-1)$ lattices, they can be completed to a function that has these modular properties by compromising on the holomorphicity property in a certain way. This construction has recently been generalized to signature $(2,n-2)$ lattices by Alexandrov, Banerjee, Manschot, and Pioline \cite{Alexandrov:2016enp}. A crucial ingredient in this work is the notion of double error functions which naturally lends itself to generalizations. In this work we study the properties of such error functions which we will call $r$-tuple error functions. We then construct an indefinite theta series for signature $(r,n-r)$ lattices and show they can be completed to modular forms by using these $r$-tuple error functions.
\end{abstract}
\maketitle

\tableofcontents

\section{Introduction}
In his seminal work on mock theta functions, Zwegers \cite{zwegers2008mock} gives three closely related constructions for mock modular forms. One of these constructions involves theta series for lattices of signature $(1,n-1)$  extending an earlier work on such lattices by G{\"o}ttsche and Zagier \cite{gottsche1998jacobi}. A natural problem then is to construct similar modular objects out of signature $(r,n-r)$ lattices. Recently, Alexandrov, Banerjee, Manschot, and Pioline \cite{Alexandrov:2016enp} gave such an extension and investigated its properties in detail for the case $r=2$ while suggesting a natural generalization for $r>2$. Further work along these lines after the groundbreaking work of \cite{Alexandrov:2016enp} includes \cite{Bringmann2016} by Bringmann, Kaszian and Rolen which uses and extends the results of \cite{Alexandrov:2016enp} (in particular for $r=3$ case) to work out the modularity properties of a function that arises in the context of Gromov-Witten theory
 and \cite{kudla2016theta} by Kudla which among other things display a relation between indefinite theta functions here and Kudla-Milson theta series \cite{kudla1986theta}.

The main problem for indefinite signature lattices is that the usual $q$-series one constructs for definite signature lattices is no longer a convergent series. One can construct a convergent series by restricting the sum over lattice points to a proper subset of them, however then generically one does not get the modular properties one would get from definite signature lattices. In \cite{zwegers2008mock} holomorphicity properties of such $q$-series are compromised in a specific way to get a modular object. Error functions used in this context are replaced in \cite{Alexandrov:2016enp} by generalized error functions. One of our goals in this paper is to study the properties of generalized error functions which we call $r$-tuple error functions in this work, closely following the methods of \cite{Alexandrov:2016enp} in their study of double error functions. 

A crucial ingredient in the analysis of \cite{Alexandrov:2016enp} is a result by Vign\'eras \cite{Vigneras1977} that shows conditions under which one can deform a theta series for an indefinite signature lattice and obtain a modular object. The deformation is accomplished through a kernel function satisfying a differential equation which we will call Vign\'eras equation. Ordinary error functions used by \cite{zwegers2008mock} and generalized error functions introduced by \cite{Alexandrov:2016enp} and studied here satisfy this equation and hence can be used in the construction of indefinite theta functions. Mere existence of these functions still does not solve the problem entirely though as one should still prove the convergence of the theta series built as such. This is a nontrivial problem and we will give a sufficient set of conditions for convergence again expanding on the methods of \cite{zwegers2008mock} and \cite{Alexandrov:2016enp}.

The outline of this paper is as follows. In Section \ref{sec:Vigneras} we review the results of \cite{Vigneras1977} and set up some notation. Then in Section \ref{sec:generalized_error} we study $r$-tuple error functions proving properties we need for the discussion of indefinite theta functions. This allows us to set up a particular form of indefinite theta series in Section \ref{sec:indef_theta} and give a sufficient set of conditions for its convergence. Finally, in Section \ref{sec:Discussion} we discuss our results and future prospects.

Note: During the course of this study the author heard of an upcoming paper by Zagier and Zwegers on indefinite theta functions of generic signature. Also after this work was completed, a preprint by Westerholt-Raum \cite{raum} has appeared discussing indefinite theta functions over tetrahedral cones. It employs a geometrical approach to discussing asymptotic properties of the kernel $E_r(\IM; u)$ we will define below. Our work instead bases its discussion over generalized complementary error functions as defined in \cite{Alexandrov:2016enp} and proves its properties for general case through their integral definitions. In particular, the decomposition of the kernel $E_r (\IM; u)$ in terms of generalized complementary error functions $M_r (\IM; u)$ (see Proposition \ref{prop:M_decomposition_of_E} and \ref{prop:boosted_decomposition}) is what is used to establish convergence properties for theta functions.

\section{Vign\'eras' Theorem and Theta Series for Indefinite Signature Lattices}\label{sec:Vigneras}

The main technical tool we will use for establishing modularity properties is Vign\'eras' theorem which we are going to review here. First we set up some notation mainly following that of \cite{Alexandrov:2016enp}. Let $\LL$ be an $n$-dimensional lattice ($n \in \IN$) endowed with an integral bilinear form $B(m,k) = m^\tp A \, k$ for $m,k \in \LL$ (and an associated quadratic form $Q(k) = k^\tp A \, k$) which we also linearly extend to $\IR^n \cong \LL \otimes \IR$. Assume that the bilinear form has signature $(r,n-r)$ where $n \geq r$ and $r\in \IN$ denotes the number of positive eigenvalues.  We will also use the notation $\del_x f(x) \equiv \lp \del_{x_1} f, \ldots, \del_{x_s} f \rp^\tp$ for $x = (x_1, \ldots, x_s)^\tp$. Lastly, we define theta series with kernel $\phi$ by (for $\l \in \IZ$, $\mu \in \LL^* / \LL$ where $\LL^*$ is the dual lattice, $\tau \equiv \tau_1 + i \tau_2 \in \IH$ for $\t_1\in \IR, \t_2 \in \IR^+$, $q \equiv e^{2\pi i \tau}$, $b,c \in \IR^n$ and $p \in \LL$ which is a characteristic vector satisfying $Q(k) + B(k,p) \in 2 \IZ$ for all $k \in \LL$)
\begin{equation}
\theta_\mu \lb \phi, \l \rb (\t, b,c) \equiv
\t_2^{-\l/2} \sum_{k \in \LL + \mu + p/2} e^{\pi i B(k,p)} \, \phi(\sqrt{2 \t_2} (k+b)) \,
q^{-Q(k+b)/2} \, e^{2 \pi i B(c,k+b/2)}.
\end{equation}
If $\phi(x) e^{\pi Q(x) / 2} \in L^1 (\IR^n)$ the absolute convergence of the sum is ensured. Now we can state  Vign\'eras's theorem:
\begin{thm}[Vign\'eras \cite{Vigneras1977}]\label{thm:vigneras}
If for any degree $\leq 2$ polynomial $R(x)$ and order $\leq 2$ differential operator $D(x)$, the functions $\phi(x) e^{\pi Q(x) / 2}$, $D(x) \lb \phi(x) e^{\pi Q(x) / 2} \rb$ and $R(x) \phi(x) e^{\pi Q(x) / 2}$ are in $L^1 (\IR^n) \cap L^2 (\IR^n)$ and if the kernel $\phi (x)$ satisfies the Vign\'eras equation
\begin{equation}
\lb B^{-1} (\del_x, \del_x ) + 2 \pi x^\tp \del_x \rb \phi(x) =  2 \pi \l \phi (x)
\end{equation}
where $B^{-1} (x,y) \equiv x^\tp A^{-1} y$, the theta function $\theta_\mu \lb \phi, \l \rb (\t, b,c) $ transforms like a Jacobi form of weight $(\l + n/2, 0)$. That is we have:
\begin{itemize}
\item $\displaystyle\theta_\mu \lb \phi, \l \rb (-1/\t, c,-b)
 = i^{\l + r} \frac{(- i \t)^{\l+n/2}}{\sqrt{\abs{\LL^*/ \LL}}}
e^{\pi i Q(p) /2} \sum_{\nu \in \LL^* / \LL} e^{2 \pi i B(\mu, \nu)}   
\theta_\nu \lb \phi, \l \rb (\t, b,c), 
 $
 \item 
 $
 \theta_\mu \lb \phi, \l \rb (\t+1, b,c+b) = e^{- \pi i Q(\mu + p/2)} 
 \theta_\mu \lb \phi, \l \rb (\t, b,c) , 
 $
 \item 
 $
 \theta_\mu \lb \phi, \l \rb (\t, b+k, c)  = (-1)^{B(k,p)} e^{-\pi i B(c,k)}
 \theta_\mu \lb \phi, \l \rb (\t, b,c) 
 $ for any $k \in \LL$,
 \item 
 $
 \theta_\mu \lb \phi, \l \rb (\t, b, c+k)  = (-1)^{B(k,p)} e^{\pi i B(b,k)}
 \theta_\mu \lb \phi, \l \rb (\t, b,c) 
 $ for any $k \in \LL$.
\end{itemize}
When we state holomorphicity in $\t$ and $z \equiv b \t - c$, we mean holomorphicity of the function $\wt{\th}_\mu \lb \phi, \l \rb (\t, z) \equiv  e^{\pi i B(b,z)}
\th_\mu \lb \phi, \l \rb (\t, b,c)$.
\end{thm}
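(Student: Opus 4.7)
The plan is to verify the four stated transformation laws one at a time. The T-transformation and the two elliptic shifts follow by direct term-by-term manipulation of the series and require neither the Vign\'eras equation nor Poisson summation; the S-transformation is the substantive step and uses both.

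For the T-transformation $\t \mapsto \t+1$, $c \mapsto c+b$, each summand acquires the factor $e^{-\pi i Q(k+b)} e^{2 \pi i B(b, k + b/2)}$, which expands to the simple $e^{-\pi i Q(k)}$. Writing $k = (\mu + p/2) + k'$ with $k' \in \LL$ and combining this with the $e^{\pi i B(k,p)}$ already present in each summand, the characteristic vector condition $Q(k') + B(k', p) \in 2\IZ$ forces $e^{-\pi i Q(k')}e^{-\pi i B(k',p)} = 1$ and $e^{-2 \pi i B(\mu, k')} = 1$, so all $k'$-dependent phases cancel and only the universal factor $e^{-\pi i Q(\mu + p/2)}$ survives. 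The two elliptic laws follow analogously by reindexing the sum via $k \to k - k_0$ for a lattice shift $k_0 \in \LL$, tracking how the quadratic and linear phases rearrange (and using again that $B(k_0,p) \in \IZ$).

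The S-transformation is the heart of the theorem. I would apply Poisson summation over $\LL$ to the sum, reindexed from $k \in \LL + \mu + p/2$ as $k = k' + \mu + p/2$ with $k' \in \LL$. This produces a sum over the dual lattice $\LL^*$ involving the $B$-Fourier transform $\wh{g}_B$ of
\begin{equation*}
g(x) = \phi(\sqrt{2\t_2}(x+b)) \, e^{-i\pi\t Q(x+b)} \, e^{2\pi i B(c, x+b/2)}.
\end{equation*}
The remaining task is to compute $\wh{g}_B$ and check that, after the modular substitutions $\t \to -1/\t$ and $(b,c) \to (c,-b)$, it matches $g$ itself up to the weight factor $(-i\t)^{\l + n/2}$, the root of unity $i^{\l + r}$, and the normalization $1/\sqrt{\abs{\LL^*/\LL}}$ arising from the Gaussian determinant. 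The additional character $e^{\pi i Q(p)/2}$ then emerges on rearranging the $p$-dependent phases accumulated during the half-integer shift.

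This Fourier transform is the main obstacle, and it is where the Vign\'eras equation plays its central role. The natural framework is to set $\wt\phi(x) \equiv \phi(x) e^{\pi Q(x)/2}$, which together with its low-order polynomial multiples and second-order derivatives lies in $L^1 \cap L^2$ by the hypotheses of the theorem. A short computation translates the Vign\'eras equation into the symmetric eigenvalue form $[B^{-1}(\del_x, \del_x) - \pi^2 Q(x)] \wt\phi = \pi(n + 2\l)\wt\phi$, whose operator is intertwined with itself by the $B$-Fourier transform after a universal rescaling $\xi \mapsto \xi/2$; consequently the $B$-Fourier transform of $\wt\phi$ satisfies the same equation, which pins down $\wh{g}_B$ to have exactly the same functional form as $g$ once the modular substitutions are made. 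The dilation $\sqrt{2\t_2} \to \sqrt{2\t_2/\abs{\t}^2}$ together with the Jacobian from completing the quadratic in the $x$-integral produce the weight $(-i\t)^{\l + n/2}$, while the root of unity $i^{\l+r}$ records the Morse index of the indefinite Gaussian $e^{-i\pi\t Q(x)}$ in its $r$ oscillatory eigendirections (the stationary-phase phase there being $e^{-i\pi r/2}$). The $L^1 \cap L^2$ hypotheses on $\phi e^{\pi Q/2}$, its polynomial multiples, and its second-order derivatives guarantee absolute convergence and the legitimacy of every interchange of limits in this argument. Finally, summing the resulting identity over $\mu \in \LL^*/\LL$ against $e^{2\pi i B(\mu, \nu)}$ reassembles the dual sum back into a theta series in $\nu$, yielding the S-transformation.
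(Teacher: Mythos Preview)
The paper does not prove this theorem; it is quoted from Vign\'eras \cite{Vigneras1977} as a black-box result and no argument is given in the paper beyond the statement itself. There is therefore no ``paper's own proof'' to compare your proposal against.

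That said, your sketch follows the standard route one finds in Vign\'eras' original paper and later expositions: the $T$-transformation and elliptic shifts are indeed elementary reindexings, and the $S$-transformation goes through Poisson summation together with the observation that the Vign\'eras equation is equivalent to $\wt\phi(x)=\phi(x)e^{\pi Q(x)/2}$ being an eigenfunction of a Hermite-type operator that is (up to rescaling) intertwined with itself by the $B$-Fourier transform. One point to be more careful about: the eigenvalue equation alone does not pin down the Fourier transform uniquely, since the relevant eigenspace is infinite-dimensional. What actually happens is that the Vign\'eras equation, rewritten as a first-order $\t_2$-evolution for the full summand $g$, shows that the Fourier transform of $g$ evolves in $\t_2$ in the same way the summand at $-1/\t$ does; matching initial data (e.g.\ as $\t_2\to\infty$ or via an explicit Gaussian computation at a reference point) then fixes the constant. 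Your sketch elides this step, so as written it has a gap at the phrase ``pins down $\wh g_B$ to have exactly the same functional form.''
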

When the kernel asymptotes to a locally polynomial and homogeneous function of degree $\l$ one can recover it from its shadow $\psi = \frac{i}{4} (x \del_x - \l) \phi$ and its asymptotic behavior. See \cite{Alexandrov:2016enp} for further details.

\section{Generalized Error Functions}\label{sec:generalized_error}
In this section we will study a natural generalization of double error functions as suggested by \cite{Alexandrov:2016enp} and prove the properties we need to define indefinite theta functions out of them. In this section and in the rest of this paper we will use the following notation: For an $s \times t$ matrix $G$, $G_{S,T}$ where $S \sbst [s]$ and $T \sbst [t]$ will mean the matrix $G$ restricted to rows and columns corresponding to subsets $S$ and $T$, respectively. If $G$ is a column vector we will drop $T$ from this notation if $T = \{ 1 \}$. Also for a column matrix $x = \lp x_1, \ldots, x_s \rp^\tp$ we will use $\prod x \equiv \prod_{j=1}^s x_j$ and $\sign{x} \equiv \prod_{j=1}^s \sign{x_j}$.

\subsection{\texorpdfstring{$r$}{r}-tuple Error Functions}\label{sec:error_fnc}
\begin{defn}\label{def:Mr}
Let $m^{(1)}, \ldots, m^{(r)} \in \IR^{r \times 1}$ be a collection of $r$ non-degenerate column vectors and $w^{(1)}, \ldots, w^{(r)} \in \IR^{r \times 1}$ be the corresponding dual basis (with respect to the Euclidean norm so that they satisfy $w^{(j_1) \tp} \, m^{(j_2)} = \d^{j_1j_2}$). Let us also define $\IM \in \IR^{r \times r} $ by $\IM = \lp  m^{(1)} \ldots m^{(r)}  \rp$ and $\IW \in \IR^{r \times r} $ by $\IW = \lp  w^{(1)} \ldots w^{(r)}  \rp$ so that $\IM^{-\tp} = \IW$. Finally let $u \in \IR^{r \times 1}$, where $u = (u_1, \ldots, u_r)^\tp$ is such that $u^\tp \, w^{(j)} \neq 0$ for all $j = 1, \ldots r$. Then we define `complementary r-tuple error function' $M_r ( \IM; u)$ using the following absolutely convergent integral:
\begin{equation}\label{eq:def_Mr}
M_r (\IM; u) \equiv \lp \frac{i}{\pi} \rp^r  \lvert \det \IM \rvert^{-1}  \int\displaylimits_{\IR^r - i u} \ddd^r z \,
\frac{e^{-\pi z^\tp z - 2 \pi i z^\tp u}}{\prd \lp \IM^{-1} z \rp},
\end{equation}
where the integration variable is represented as a column matrix $z = (z_1, \ldots, z_r)^\tp$.
\end{defn}

\begin{defn}\label{def:Er}
Let $m^{(1)}, \ldots, m^{(r)} \in \IR^{r \times 1}$ be a collection of $r$ non-degenerate column vectors (where we use $\IM \equiv \lp  m^{(1)} \ldots m^{(r)}  \rp$ as in Definition \ref{def:Mr}) and let  $u = (u_1, \ldots, u_r)^\tp \in \IR^{r \times 1}$. We then define `r-tuple error function' $E_r ( \IM; u)$ as
\begin{equation}\label{eq:def_Er}
E_r (\IM; u) \equiv   \int\displaylimits_{\IR^r} \ddd^r  \up \,  e^{- \pi (u - \up)^\tp (u - \up)} \,
\sign{\IM^\tp \up}.
\end{equation}
\end{defn}
Note that $E_r (\IM; u)$ is a $\mathcal{C}^\infty$ function of $u$ for any non-degenerate $\IM$.\footnote{It is useful to compare our definitions to those of \cite{Alexandrov:2016enp}. $M_1(1;u)$ here is simply equal to $M_1(u) = - \sign{u} \, \mathrm{erfc} \lp \lvert u \rvert \sqrt{\pi} \rp$ there, $M_2 \lp \pmat{1 & - \a \\ 0 & 1}^{-\tp} ; \pmat{u_1 \\ u_2}  \rp$ here is equal to the double error function $M_2 (\a; u_1, u_2)$ of \cite{Alexandrov:2016enp} and  $M_2 \lp \pmat{1 & 1 \\ -\a & -\b}^{-\tp} ; \pmat{u_1 \\ u_2}  \rp$ here is equal to $- M_2 \lp \a, \b; u_1, u_2 \rp \, \sign{\a-\b}$ there.}

\begin{prop}\label{prop:Mr_basic_property}
\hfill
\begin{enumerate}[label=(\alph*)]
\item $M_r$ and $E_r$ are invariant under permutations of $m^{(j)}$'s. In other words, for any $r \times r$ permutation matrix $P$ we have $M_r (\IM P; u) = M_r (\IM; u)$ and $E_r (\IM P; u) = E_r (\IM; u)$. Moreover, $M_r$ and $E_r$ do not change under independent positive scalings of $m^{(j)}$'s and change their sign whenever one of $m^{(j)}$'s changes its sign; in other words, for any diagonal $r \times r$ diagonal matrix $D$ all of whose diagonal entries are non-zero real numbers we have $M_r (\IM D; u) = \sign{\det D} \, M_r (\IM; u)$ and $E_r (\IM D; u) = \sign{\det D} \, E_r (\IM; u)$.

\item $M_r$ and $E_r$ are invariant under orthogonal transformations, that is, for any $\LL \in \O (r; \IR)$ we have
\begin{equation}
M_r (\LL \IM; \LL u) = M_r (\IM; u)
\andd
E_r (\LL \IM; \LL u) = E_r (\IM; u) .
\end{equation}

\item If $\IM = \pmat{\IM_s^{(1)}  &  0 \\  0  & \IM_{r-s}^{(2)}}$ is of block diagonal form then
\begin{align}
M_r (\IM; u) &= M_s (\IM_s^{(1)}; u_{[1,s]})  \, M_{r-s} (\IM_{r-s}^{(2)}; u_{[s+1,r]})    \\
&  \andd  \notag \\
E_r (\IM; u) &= E_s (\IM_s^{(1)}; u_{[1,s]})  \, E_{r-s} (\IM_{r-s}^{(2)}; u_{[s+1,r]})
\end{align}
where $u_{[j_1,j_2]} \equiv (u_{j_1}, \ldots, u_{j_2})^\tp$. Note that whenever $m^{(j)}$'s split into two sets spanning orthogonal subspaces we have a similar factorization property using parts (a) and (b) of this proposition since then $\IM$ can be brought into a block diagonal form using $\O (r; \IR)$ transformations and permutations.
\end{enumerate}
\end{prop}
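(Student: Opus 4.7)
The plan is to derive all three parts directly from the integral representations in Definitions \ref{def:Mr} and \ref{def:Er} using elementary changes of variables and factorization arguments. None of the steps is conceptually deep; the only point that requires some care is tracking the shifted contour $\IR^r - i u$ in the definition of $M_r$ under each substitution, and verifying that the nondegeneracy conditions are preserved so that no poles of $1/\prd(\IM^{-1} z)$ are crossed.

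For part (a), the $\IM$-dependence of $E_r$ enters only through $\sign{\IM^\tp \up} = \prd_j \sign{m^{(j)\tp}\up}$. Permuting the columns of $\IM$ merely permutes the factors, and replacing $m^{(j)}$ by $D_{jj} m^{(j)}$ multiplies the $j$-th factor by $\sign{D_{jj}}$, yielding the overall sign $\sign{\det D}$. For $M_r$, the relevant $\IM$-dependence sits in $|\det \IM|^{-1}$ and $\prd(\IM^{-1} z)$: column permutations of $\IM$ merely permute the entries of $\IM^{-1} z$ without changing the product, while $\IM \mapsto \IM D$ gives $(\IM D)^{-1} z = D^{-1} \IM^{-1} z$ and hence $\prd((\IM D)^{-1} z) = (\det D)^{-1} \prd(\IM^{-1} z)$. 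Combined with $|\det(\IM D)|^{-1} = |\det D|^{-1} |\det \IM|^{-1}$, the ratio $(\det D)/|\det D| = \sign{\det D}$ emerges as claimed.

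For part (b), the substitution $\up \mapsto \LL \up$ in \eqref{eq:def_Er} turns $(\LL u - \up)^\tp(\LL u - \up)$ into $(u-\up)^\tp(u-\up)$ and $(\LL\IM)^\tp \LL \up$ into $\IM^\tp \up$, while the Jacobian is unity. Similarly, $z \mapsto \LL z$ in \eqref{eq:def_Mr} leaves $z^\tp z$ and the pairing $z^\tp u$ (after the transformation $u \mapsto \LL u$) invariant, reduces $(\LL\IM)^{-1}\LL z$ to $\IM^{-1} z$, and transports the shifted contour from $\IR^r - i \LL u$ to $\IR^r - i u$ since $\LL$ preserves $\IR^r$. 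The nondegeneracy condition for $M_r(\LL\IM; \LL u)$ reads $(\LL u)^\tp (\LL w^{(j)}) = u^\tp w^{(j)} \neq 0$, which is precisely the original requirement, so no poles are encountered during the substitution.

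For part (c), a block-diagonal $\IM$ causes the quadratic form $(u-\up)^\tp(u-\up)$ (respectively $z^\tp z$ and $z^\tp u$) to split as a sum over the two index blocks $[1,s]$ and $[s+1,r]$, and the factor $\sign{\IM^\tp \up}$ (respectively $\prd(\IM^{-1} z)$) to split as a product over the same blocks. With $|\det \IM| = |\det \IM_s^{(1)}| \cdot |\det \IM_{r-s}^{(2)}|$ and $(i/\pi)^r = (i/\pi)^s (i/\pi)^{r-s}$, Fubini then separates the integrals into the claimed products, the shifted contour $\IR^r - iu$ decomposing as $(\IR^s - iu_{[1,s]}) \times (\IR^{r-s} - iu_{[s+1,r]})$. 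The closing remark follows by first permuting the $m^{(j)}$'s via part (a) to group the two orthogonal sets, then applying a block-diagonal orthogonal transformation via part (b) to put $\IM$ into block-diagonal form, and invoking the factorization just established.
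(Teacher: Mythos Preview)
Your proof is correct and follows the same approach as the paper, which simply states that all parts follow trivially from Definitions~\ref{def:Mr} and~\ref{def:Er}; you have spelled out the changes of variables and the contour/domain checks that make this assertion precise. One small wording point: in the closing remark of part~(c), the orthogonal transformation needed to bring the two orthogonal spans into coordinate blocks is not itself block-diagonal in general---it is an arbitrary $\LL\in\O(r;\IR)$ mapping $V_1$ to $\IR^s\times\{0\}$ and $V_2$ to $\{0\}\times\IR^{r-s}$---but this does not affect the argument.
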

\begin{proof}
All of the statements trivially follow from Definitions \ref{def:Mr} and \ref{def:Er}.
\end{proof}

Before proceeding any further let us introduce some notation.
\begin{itemize}
\item For each $S \sbst \rr$ consider the subspace spanned by $\{ m^{(j)} \st j \in S \}$ and pick an orthonormal basis for it, $b_1^{(S)}, \ldots, b_{\abs{S}}^{(S)}$, where we will use the standard basis $b_1 = (1,0,\ldots,0)^\tp$,  $b_2 = (0,1,0,\ldots,0)^\tp$, $\ldots$ , $b_r = (0,0,\ldots,0,1)^\tp$ for $S = \rr$.

Now for any $S \sbst S' \sbst \rr$, form matrices $Q_{S,S'} \in \IR^{\abs{S} \times \abs{S'}}$ whose rows are the components of $b_1^{(S)}, \ldots, b_{\abs{S}}^{(S)}$ in the basis $b_1^{(S')}, \ldots, b_{\abs{S'}}^{(S')}$, in other words $\lp Q_{S,S'} \rp_{j_1j_2} = b_{j_1}^{(S)\tp} b_{j_2}^{(S')}$. We also will use $Q_S \equiv Q_{S,\rr}$. Essentially, these matrices will form the projectors to subspaces  $\langle m^{(j)} \st j \in S \rangle$. Choosing different orthonormal bases correspond to transforming $Q_{S,S'} \to \Lambda_{\abs{S}} Q_{S,S'}  \Lambda^\tp_{\abs{S'}}$ for $S' \neq \rr$ and $Q_{S} \to \Lambda_{\abs{S}} Q_{S}$ where $\Lambda_n \in \O (n; \IR)$.

Let us state now a couple of properties for future reference:
\begin{enumerate}
\item $Q_{S,S'} \, Q_{S,S'}^\tp = I_{\abs{S}}$ for any $S \sbst S' \sbst \rr$.
\item $Q_S^\tp Q_S m^{(j)} = m^{(j)}$ for $j \in S$.
\item $Q_S w^{(j)} = 0$ for $j \in \rr / S$.
\item $Q_{S,S'} Q_{S',S''} = Q_{S,S''}$ for any $S \sbst S' \sbst S'' \sbst \rr$.
\end{enumerate}

\item Similarly, for each $S \sbst \rr$ consider the subspace spanned by $\{ w^{(j)} \st j \in S \}$ and pick an orthonormal basis for it, $c_1^{(S)}, \ldots, c_{\abs{S}}^{(S)}$, where again we will use the standard basis $c_1 = (1,0,\ldots,0)^\tp$,   $c_2 = (0,1,0,\ldots,0)^\tp$, $\ldots$, $c_r = (0,0,\ldots,0,1)^\tp$ for $S = \rr$.

For any $S \sbst S' \sbst \rr$ we form matrices $P_{S,S'} \in \IR^{\abs{S} \times \abs{S'}}$ whose rows are the components of $c_1^{(S)}, \ldots, c_{\abs{S}}^{(S)}$ in the basis $c_1^{(S')}, \ldots, c_{\abs{S'}}^{(S')}$, or in other words $\lp P_{S,S'} \rp_{j_1j_2} = c_{j_1}^{(S)\tp} c_{j_2}^{(S')}$. We also will use $P_S \equiv P_{S,\rr}$. Choosing different orthonormal bases correspond to transforming $P_{S,S'} \to \Lambda_{\abs{S}} P_{S,S'}  \Lambda^\tp_{\abs{S'}}$ for $S' \neq \rr$ and $P_{S} \to \Lambda_{\abs{S}} P_{S}$ where $\Lambda_n \in \O (n; \IR)$.

These matrices satisfy:
\begin{enumerate}
\item $P_{S,S'} \, P_{S,S'}^\tp = I_{\abs{S}}$ for any $S \sbst S' \sbst \rr$.
\item $P_S^\tp P_S w^{(j)} = w^{(j)}$ for $j \in S$.
\item $P_S m^{(j)} = 0$ for $j \in \rr / S$.
\item $P_{S,S'} P_{S',S''} = P_{S,S''}$ for any $S \sbst S' \sbst S'' \sbst \rr$.
\item $\mat{ Q_S \\P_{\rr /S} }  \in \O (r, \IR) $ for any $S \sbst \rr$.
\end{enumerate}

\item Let $\IM_S$ denote the matrix $\IM_S = \lp m^{(j_1)} \ m^{(j_2)} \ldots m^{(j_{\abs{S}})} \rp$ where $j_1,j_2,\ldots,j_{\abs{S}}\in S \sbst \rr$ and  $j_1 < j_2 < \ldots < j_{\abs{S}}$. We will also use $\IW_S$ for similarly constructed matrices out of $w^{(j)}$'s. Note that $\IW_S^\tp \, \IM_S = I_{\abs{S}}$ and moreover since $Q_S^\tp Q_S \IM_S = \IM_S$ and $P_S^\tp P_S \IW_S = \IW_S$ we have $\lp Q_S \IM_S \rp^{-1} = \IW_S^\tp Q_S^\tp$ and  $\lp P_S \IM_S \rp^{-1} = \IW_S^\tp P_S^\tp$.

\end{itemize}

\begin{prop}\label{prop:Mr_discontunity}
For any nonsingular $\IM \in \IR^{r \times r}$ and $u \in \IR^{r \times 1}$ away from the loci $w^{(j) \tp} u = 0$, the function 
$M_r (\IM; u)$ is a real valued $\mathcal{C}^\infty$ function. Its discontinuity as $w^{(j) \tp} u \to 0$ for all $j \in \rr / S$ is given by
\begin{equation}\label{eq:Mr_discontunity}
M_r (\IM; u) \to (-1)^{r - \abs{S}} \,  \sign{\IW^\tp_{\rr / S} u } \,
							M_{\abs{S}} \lp Q_S \IM_S; Q_S u \rp.
\end{equation}
\end{prop}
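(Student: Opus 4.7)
The plan is to analyze the integral (\ref{eq:def_Mr}) defining $M_r(\IM; u)$ in the limit where $u$ approaches the locus $w^{(j)\tp} u = 0$ for all $j \in \rr / S$. The poles of the integrand lie on the hyperplanes $w^{(j)\tp} z = 0$, and the contour $\IR^r - iu$ avoids them precisely when each $w^{(j)\tp} u \neq 0$. Smoothness of $M_r$ away from these loci then follows by differentiating under the integral sign, with Gaussian decay justifying the interchange.

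For the discontinuity, I would first change variables $y = \IM^{-1} z$ to place the poles on the coordinate hyperplanes $y_j = 0$. Writing $v = \IM^{-1} u$ (so $v_j = w^{(j)\tp} u$), $A = \IM^\tp \IM$, and $b = \IM^\tp u$, the Jacobian cancels the $|\det \IM|^{-1}$ in the definition and one finds
\[
M_r(\IM; u) = \lp \frac{i}{\pi} \rp^r \int_{\IR^r - iv} \frac{e^{-\pi y^\tp A y - 2\pi i y^\tp b}}{\prod_j y_j} \, dy.
\]
The contour in each factor $(y_j - iv_j)^{-1}$ with $j \in \rr/S$ is pushed onto the real line as $v_j \to 0^{\ve_j}$; applying the distributional Sokhotski--Plemelj identity
\[
\frac{1}{y_j - iv_j} \longrightarrow \mathrm{PV}\,\frac{1}{y_j} + i\pi \ve_j \, \delta(y_j)
\]
and expanding the product $\prod_{j \in \rr/S}(y_j - iv_j)^{-1}$ yields $2^{\abs{\rr/S}}$ terms. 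The unique contribution whose sign dependence is the full product $\prod_{j \in \rr/S}\ve_j = \sign{\IW_{\rr/S}^\tp u}$ --- i.e., the discontinuous piece stated in the proposition --- comes from the ``all-deltas'' term $\prod_{j \in \rr/S}(i\pi \ve_j \delta(y_j))$, which sets $y_{\rr/S} = 0$ in the rest of the integrand.

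With $y_{\rr/S} = 0$, the Gaussian reduces to $e^{-\pi y_S^\tp A_{SS} y_S - 2\pi i y_S^\tp b_S}$, leaving an $\abs{S}$-dimensional integral over $y_S \in \IR^{\abs{S}} - iv_S$. To identify this with $M_{\abs{S}}(Q_S \IM_S; Q_S u)$ in the limit $u \to u_S := \sum_{j \in S} v_j m^{(j)}$, I would use $A_{SS} = \IM_S^\tp \IM_S = (Q_S\IM_S)^\tp (Q_S\IM_S)$ (from $Q_S^\tp Q_S \IM_S = \IM_S$), $\IM_S^\tp Q_S^\tp Q_S u \to \IM_S^\tp u_S = b_S$ (since $u_S$ lies in the span of $\{m^{(j)} : j \in S\}$), and $(Q_S \IM_S)^{-1} Q_S u = \IW_S^\tp Q_S^\tp Q_S u$ which tends to $\IW_S^\tp u_S = v_S$ so that the contour shifts agree. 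The change of variables $\tilde z = (Q_S \IM_S) y_S$ in the definition of $M_{\abs{S}}$ then converts (\ref{eq:def_Mr}) into the reduced integral, with the $|\det(Q_S \IM_S)|^{-1}$ absorbing the Jacobian. Collecting the prefactor $(i/\pi)^r (i\pi)^{\abs{\rr/S}} = (-1)^{\abs{\rr/S}}(i/\pi)^{\abs{S}}$ together with $\prod_{j \in \rr/S}\ve_j = \sign{\IW_{\rr/S}^\tp u}$ reproduces $(-1)^{r-\abs{S}} \sign{\IW_{\rr/S}^\tp u} M_{\abs{S}}(Q_S \IM_S; Q_S u)$, matching the claim.

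The main technical subtlety is justifying the iterated Sokhotski--Plemelj manipulation rigorously: because the Gaussian couples the $y_{\rr/S}$ directions to the $y_S$ ones, the limit does not factorize naively, and one must argue carefully that the $\sign{\IW_{\rr/S}^\tp u}$-dependent part isolates to the all-deltas contribution while the other principal-value terms provide only the smooth background. Uniform Gaussian decay in all directions allows the required interchange of limits with integration; alternatively, the same conclusion can be reached by a contour-deformation argument in the $y_{\rr/S}$ variables, where each pole at $y_j = 0$ contributes a residue whose sign tracks $\ve_j$, producing exactly the all-deltas term upon iteration.
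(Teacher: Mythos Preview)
Your approach is essentially equivalent to the paper's, though packaged differently. The paper performs the change of variables $v_j = \dfrac{w^{(j)\tp} z}{w^{(j)\tp} u} + i$, which is precisely your $y_j = w^{(j)\tp} z$ rescaled by $1/(w^{(j)\tp} u)$ and shifted; this automatically pulls the factors $\sign{w^{(j)\tp} u}$ out of the Jacobian and moves every pole to the fixed point $v_j = i$. In those variables the Gaussian becomes $e^{-\pi\tilde v^\tp \IM^\tp\IM\tilde v}$ with $\tilde v_j = v_j\,w^{(j)\tp} u$, and the paper then takes the pointwise limit $\tilde v_{\rr/S}\to 0$ inside the integral and declares each remaining $\displaystyle\int_\IR \frac{dv_j}{v_j-i}$ to equal $i\pi$. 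Your Sokhotski--Plemelj expansion is the honest version of exactly that step: the ``all-deltas'' contribution is the residue that the paper's formal limit isolates.

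The one point worth flagging is that you and the paper are computing the same thing, but neither argument quite justifies reading the statement as a \emph{full} limiting value. In your language the mixed PV/delta terms do not vanish in general (the Gaussian cross terms $A_{jk}y_jy_k$ couple $y_{\rr/S}$ to $y_S$, so the one-variable PV integrals are odd but nonzero functions of the remaining variables); in the paper's language the interchange of limit and integral that produces $\int dv_j/(v_j-i)=i\pi$ suppresses exactly these contributions. What is actually being established---and what is used downstream before Proposition~\ref{prop:M_decomposition_of_E}---is the \emph{sign-dependent jump} across the wall $\IW_{\rr/S}^\tp u = 0$: the piece of the limit proportional to the full product $\prod_{j\in\rr/S}\ve_j$. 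Your remark that the PV terms form the ``smooth background'' is therefore the correct interpretation, and in that sense your write-up is slightly more transparent than the paper's about what the word ``discontinuity'' means here.
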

\begin{proof}
We start by defining variables $v_j = \frac{w^{(j) \tp} z }{w^{(j) \tp} u}+i $. The Jacobian factor associated with this change of variables is $\abs{\frac{\del v}{\del z}} = \frac{\abs{\det \IM}^{-1}}{\prod\displaylimits_{j=1}^r \abs{w^{(j)\tp}u }}$.
Defining $\wt{v} (v, u, \IM) \equiv \mat{ v_1 w^{(1)\tp} u \\ \vdots \\ v_r w^{(r)\tp} u}$ we can rewrite $M_r (\IM; u)$ as
\begin{equation}
\lp \frac{i}{\pi} \rp^r  \sign{\IW^\tp_{\rr / S} u }  \sign{\IW^\tp_{S} u } \, e^{- \pi u^\tp u} \int\displaylimits_{\IR^r} \ddd^r v \, \frac{e^{-\pi \wt{v}(v,u,\IM)^\tp \IM^\tp \IM \, \wt{v}(v,u,\IM)  }}{\prod \displaylimits_{j=1}^r (v_j - i)} 
.
\end{equation}
As $w^{(j) \tp} u \to 0 $ for $j \in \rr / S$, the components of $\wt{v}(v, u, \IM)$ corresponding to $j \in \rr / S$ go to zero and $\IM \, \wt{v}(v, u, \IM) \to \sum_{j \in S} m^{(j)} v_j w^{(j) \tp} u$ and hence
\begin{equation}
\wt{v}(v, u, \IM)^\tp \IM^\tp \IM \, \wt{v}(v, u, \IM) 
\to \wt{v}_S(v, u, \IM)^\tp \IM_S^\tp \IM_S \, \wt{v}_S(v, u, \IM),
\end{equation}
where we should also note that $\IM_S^\tp \IM_S = \IM_S^\tp Q_S^\tp Q_S \IM_S$.
Also for $\IW^\tp_{\rr / S} u = 0$ we have $Q_S^\tp Q_S u = u$ which in turn gives $\IW_S^\tp u  = \IW_S^T Q_S^\tp Q_S u = (Q_S \IM_S)^{-1} Q_S u$, $u^\tp u = u^\tp  Q_S^\tp Q_S u$ and implies $\wt{v}_S(v, u, \IM) = \wt{v} (v_S, Q_S u, Q_S \IM_S)$. Then we can combine the factor $\lp \frac{i}{\pi} \rp^{\abs{S}}   \sign{\IW^\tp_{S} u } \, e^{- \pi u^\tp u} $ with integrals over $v_S$ to obtain the $M_{\abs{S}} \lp Q_S \IM_S; Q_S u \rp$ part. Finally, in the limit $\IW^\tp_{\rr / S} u \to 0$, remaining $v_{\rr / S}$ integrals give $(i \pi)^{r - \abs{S}}$ and we obtain the discontinuity described in \eqref{eq:Mr_discontunity}.
\end{proof}

\begin{rem}
For large $u$, $E_r (\IM; u)$ is locally constant as $E_r (\IM; u) \sim \sign{\IM^\tp u}$ whereas $M_r (\IM; u)$ is exponentially suppressed as $M_r (\IM; u) \sim \displaystyle \frac{(-1)^r}{\pi^r} \abs{\det \IM}^{-1} \frac{ e^{- \pi u^\tp u} }{\prd \lp \IW^\tp u \rp}$. The asymptotic behavior of $E_r (\IM; u)$ is obvious from its definition in equation \eqref{eq:def_Er} whereas the asymptotic behavior of $M_r (\IM; u)$ can be deduced from a saddle point approximation. We will make the asymptotic behavior of both functions more precise in our discussion.
\end{rem}

\begin{lem}\label{prop:M_E_derivative}
First derivatives of $M_r (\IM; u)$ and $E_r (\IM; u)$ with respect to $u$ are given by:
\begin{equation}
w^{(j) \tp} \del_u M_r (\IM; u) = \frac{2}{\norm{m^{(j)}}  }
e^{- \pi u^\tp Q_{\{ j \} }^\tp Q_{\{ j \} } u    } 
M_{r-1} (P_{\rr / \{ j \} }  \IM_{\rr / \{ j \} }; P_{\rr / \{ j \} } u),
\end{equation}
\begin{equation}
w^{(j) \tp} \del_u E_r (\IM; u) = \frac{2}{\norm{m^{(j)}}  }
e^{- \pi u^\tp Q_{\{ j \} }^\tp Q_{\{ j \} } u    } 
E_{r-1} (P_{\rr / \{ j \} }  \IM_{\rr / \{ j \} }; P_{\rr / \{ j \} } u).
\end{equation}
\end{lem}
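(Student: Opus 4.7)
My plan is to differentiate each defining integral directly and then recognize the outcome as the lower-dimensional error function by a projector-based change of variables, making essential use of the orthogonal decomposition $I = Q_{\{j\}}^\tp Q_{\{j\}} + P_{\rr/\{j\}}^\tp P_{\rr/\{j\}}$ (property (e) of the $P$-matrices). For $E_r(\IM;u)$, I would first substitute $v = u - u'$ in Definition~\ref{def:Er} to obtain
\[
E_r(\IM;u) = \int_{\IR^r} \ddd^r v \, e^{-\pi v^\tp v} \, \prd_{l=1}^r \sign{m^{(l)\tp}(u-v)},
\]
placing all $u$-dependence in the sign factors (smoothness in $u$ follows because this is a Gaussian convolution of a piecewise-constant function). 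Differentiating, the distributional identity $\del_u \sign{m^{(l)\tp}(u-v)} = 2 m^{(l)} \delta(m^{(l)\tp}(u-v))$ combined with $w^{(j)\tp} m^{(l)} = \delta^{jl}$ leaves only the $l=j$ summand, producing $2\,\delta(m^{(j)\tp}(u-v))$ times the remaining signs. I then decompose $v = Q_{\{j\}}^\tp \alpha + P_{\rr/\{j\}}^\tp \tilde v$, observe $m^{(j)\tp}(u-v) = m^{(j)\tp} u - \norm{m^{(j)}}\alpha$, and integrate out $\alpha$; this gives the $1/\norm{m^{(j)}}$ prefactor, forces $v - P_{\rr/\{j\}}^\tp \tilde v = Q_{\{j\}}^\tp Q_{\{j\}} u$, factors the Gaussian as $e^{-\pi u^\tp Q_{\{j\}}^\tp Q_{\{j\}} u}\, e^{-\pi \tilde v^\tp \tilde v}$, and recasts the remaining signs as $\sign{(P_{\rr/\{j\}}\IM_{\rr/\{j\}})^\tp (P_{\rr/\{j\}} u - \tilde v)}$. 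A final translation in $\tilde v$ identifies what remains with $E_{r-1}(P_{\rr/\{j\}}\IM_{\rr/\{j\}};\, P_{\rr/\{j\}} u)$.

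For $M_r(\IM;u)$, I would pass to the $y = \IM^{-1} z$ variable in Definition~\ref{def:Mr}, obtaining
\[
M_r(\IM;u) = (i/\pi)^r \int_{\IR^r - i \IW^\tp u} \ddd^r y \, \frac{e^{-\pi y^\tp \IM^\tp \IM y - 2\pi i y^\tp \IM^\tp u}}{\prd y}.
\]
Since $w^{(l)\tp} u \neq 0$ for all $l$, small perturbations of $u$ do not move the contour across the singular hyperplanes $y_l = 0$, so differentiation under the integral is legitimate. The relation $\IM^\tp w^{(j)} = e_j$ makes $w^{(j)\tp} \del_u$ bring down $-2\pi i y_j$, which cancels the $y_j$ in $\prd y$. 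The remaining $y_j$-integrand is then a regular shifted Gaussian; flattening its contour to $\IR$ and completing the square produces $1/\norm{m^{(j)}}$ together with a quadratic correction involving $B_j = m^{(j)\tp}\IM_{\rr/\{j\}}\,y_{\rr/\{j\}}$. Combining with the original quadratic and linear pieces in $y_{\rr/\{j\}}$ and invoking $I - Q_{\{j\}}^\tp Q_{\{j\}} = P_{\rr/\{j\}}^\tp P_{\rr/\{j\}}$ converts the quadratic form on $y_{\rr/\{j\}}$ into $\IM_{\rr/\{j\}}^\tp P_{\rr/\{j\}}^\tp P_{\rr/\{j\}} \IM_{\rr/\{j\}}$, the linear source into a coupling with $P_{\rr/\{j\}}^\tp P_{\rr/\{j\}} u$, and isolates the constant $e^{-\pi u^\tp Q_{\{j\}}^\tp Q_{\{j\}} u}$. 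The identity $\IW_{\rr/\{j\}}^\tp u = \IW_{\rr/\{j\}}^\tp P_{\rr/\{j\}}^\tp P_{\rr/\{j\}} u$ then ensures the surviving contour $\IR^{r-1} - i\IW_{\rr/\{j\}}^\tp u$ coincides with the one Definition~\ref{def:Mr} assigns to $M_{r-1}(P_{\rr/\{j\}}\IM_{\rr/\{j\}};\,P_{\rr/\{j\}} u)$. Tracking the overall prefactor $(i/\pi)^r (-2\pi i)(\pi/i)^{r-1} = 2$ completes the identification.

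The main obstacle I anticipate is the algebraic and contour bookkeeping in the $M_r$ step: one must verify that after performing the $y_j$ Gaussian the residual quadratic form, linear source, contour displacement, and residual exponential factor all reassemble into exactly the data prescribed for $M_{r-1}(P_{\rr/\{j\}}\IM_{\rr/\{j\}};\,P_{\rr/\{j\}} u)$. This hinges entirely on the orthogonal decomposition $I = Q_{\{j\}}^\tp Q_{\{j\}} + P_{\rr/\{j\}}^\tp P_{\rr/\{j\}}$ and on the stability relations $P_{\rr/\{j\}}^\tp P_{\rr/\{j\}} w^{(l)} = w^{(l)}$ for $l \neq j$ together with non-degeneracy of $P_{\rr/\{j\}}\IM_{\rr/\{j\}}$. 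The $E_r$ version is the same in spirit but simpler, as the delta function projects directly onto the $P_{\rr/\{j\}}^\tp$-subspace with no residual Gaussian work required.
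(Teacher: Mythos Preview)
Your proposal is correct and follows essentially the same route as the paper: differentiate under the integral so that the $j$-th denominator factor cancels (for $M_r$) or a delta function localizes to the hyperplane $m^{(j)\tp}u'=0$ (for $E_r$), then use the orthogonal splitting $I = Q_{\{j\}}^\tp Q_{\{j\}} + P_{\rr/\{j\}}^\tp P_{\rr/\{j\}}$ to perform one Gaussian integral and recognize the remainder as the $(r-1)$-dimensional object. The only cosmetic difference is that for $M_r$ the paper stays in the $z$-variables and splits orthogonally there (so the Gaussian factors cleanly and the $1/\norm{m^{(j)}}$ comes from a determinant ratio), whereas you pass to $y=\IM^{-1}z$ and must complete the square in $y_j$; both are valid, but the paper's ordering avoids the cross-term bookkeeping you flag as the main obstacle.
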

\begin{proof}
We start with  $M_r (\IM; u)$ and its definition in terms of the integral $M_r (\IM; u) = \displaystyle \lp \frac{i}{\pi} \rp^r  \abs{ \det \IM }^{-1}  \int\displaylimits_{\IR^r - i u} \ddd^r z \,
\frac{e^{-\pi z^\tp z - 2 \pi i z^\tp u}}{\prd \lp \IW^\tp z \rp}$. The derivative $\del_u$ acting on the integral limits gives vanishing contribution because of the exponential suppression, then we simply act it on the integrand to find
\begin{equation}
w^{(j) \tp} \del_u M_r (\IM; u) = \lp \frac{i}{\pi} \rp^{r-1} 2 \abs{ \det \IM }^{-1}
\int\displaylimits_{\IR^r - i u} \ddd^r z \,
\frac{e^{-\pi z^\tp z - 2 \pi i z^\tp u}}{\prd \lp  \IW^\tp_{\rr / \{ j \} } z \rp}.
\end{equation}
Since $\mat{ Q_{\{ j \}  } \\P_{\rr / \{ j \}} }  \in \O (r, \IR)$ and $P_{\rr / \{ j \}}^\tp P_{\rr / \{ j \}}  \IW_{\rr / \{ j \}}  = \IW_{\rr / \{ j \}} $ we can rewrite this as 
\begin{align}
w^{(j) \tp} \del_u M_r (\IM; u) &= \lp \frac{i}{\pi} \rp^{r-1} 2 \abs{ \det \IM }^{-1} 
\int\displaylimits_{\IR^r - i u} \ddd^r z \,
\frac{e^{-\pi z^\tp P_{\rr / \{ j \}}^\tp P_{\rr / \{ j \}}  z - 2 \pi i z^\tp P_{\rr / \{ j \}}^\tp P_{\rr / \{ j \}}  u}}{\prd \lp  \IW^\tp_{\rr / \{ j \} } P_{\rr / \{ j \}}^\tp P_{\rr / \{ j \}}  z \rp} 
\notag \\
& \qquad \times e^{- \pi u^\tp Q_{\{ j \} }^\tp Q_{\{ j \} } u    } 
  e^{-\pi (z+i u)^\tp Q_{ \{ j \}}^\tp Q_{\{ j \}}  (z+i u)}.
\end{align}
Performing a change of variables $\wt{z} = P_{\rr / \{ j \}} z$ and $z_0  = Q_{\{ j \}} (z + i u )$ and taking the integral over $z_0$ we get
\begin{equation}
w^{(j) \tp} \del_u M_r (\IM; u) = 2  e^{- \pi u^\tp Q_{\{ j \} }^\tp Q_{\{ j \} } u    } 
\frac{\abs{\det P_{\rr / \{ j \}} \IM_{\rr / \{ j \}}}   }{ \abs{\det \IM}}
M_{r-1} (P_{\rr / \{ j \} }  \IM_{\rr / \{ j \} }; P_{\rr / \{ j \} } u) .
\end{equation}
Now note that 
\begin{equation}
\abs{\det \IM} =  \abs{ \mat{ Q_{\{ j \}  } \\P_{\rr / \{ j \}} }  \IM  }
= \abs{\mat{  Q_{\{ j \} } \IM_{\rr / \{ j \} }    &  Q_{\{ j \} } m^{(j)}   \\
P_{\rr / \{ j \}} \IM_{\rr / \{ j \} }    & P_{\rr / \{ j \}}  m^{(j)}      }}.
\end{equation}
Since $P_{\rr / \{ j \}}  m^{(j)}  = 0$ this simply reduces to
\begin{equation}
\abs{\det \IM} = \abs{Q_{\{ j \} } m^{(j)}  } \, \abs{\det P_{\rr / \{ j \}} \IM_{\rr / \{ j \}}} 
 =\norm{m^{(j)}} \, \abs{\det P_{\rr / \{ j \}} \IM_{\rr / \{ j \}}} 
\end{equation}
finally proving our assertion for $M_r (\IM; u)$.

Now let us study $E_r (\IM; u) =   \displaystyle \int\displaylimits_{\IR^r} \ddd^r  \up \,  e^{- \pi (u - \up)^\tp (u - \up)} \,
\sign{\IM^\tp \up}$:
\begin{align}
&w^{(j) \tp} \del_u E_r (\IM; u)   
\notag \\
&\qquad =
\displaystyle \int\displaylimits_{\IR^r} \ddd^r  \up \, 
\lb - w^{(j) \tp} \del_\up  e^{- \pi (u - \up)^\tp (u - \up)} \rb \,
\sign{\IM_{\rr / \{ j \}}^\tp \up} \sign{m^{(j) \tp} \up}.  
\end{align}
Noting that $w^{(j) \tp} \del_\up \lp \IM_{\rr / \{ j \}}^\tp \up \rp = 0$ and $w^{(j) \tp} \del_\up \lb \sign{m^{(j) \tp} \up} \rb = 2 \d \lp m^{(j) \tp} \up \rp = 
\frac{2}{\norm{m^{(j)}}  } \,  \d \lp  Q_{\{ j \} }  \up \rp$ and integrating by parts we get
\begin{align}
w^{(j) \tp} \del_u E_r (\IM; u)  &=  \frac{2}{\norm{m^{(j)}}  }
\displaystyle \int\displaylimits_{\IR^r} \ddd^r  \up \, e^{- \pi (u - \up)^\tp (u - \up)} \, 
\sign{\IM_{\rr / \{ j \}}^\tp \up}  \d \lp  Q_{\{ j \} }  \up \rp     \\
&=  \frac{2}{\norm{m^{(j)}}  }
\displaystyle \int\displaylimits_{\IR^r} \ddd^r  \up \,
 e^{- \pi (u - \up)^\tp 
 \lp P_{\rr / \{ j \}}^\tp P_{\rr / \{ j \}} + Q_{ \{ j \}}^\tp Q_{\{ j \}} \rp 
 (u - \up)} \,   \notag \\
 & \qquad \times
\sign{\IM_{\rr / \{ j \}}^\tp
 \lp P_{\rr / \{ j \}}^\tp P_{\rr / \{ j \}} + Q_{ \{ j \}}^\tp Q_{\{ j \}} \rp 
  \up}  
\d \lp  Q_{\{ j \} }  \up \rp.
\end{align}
Performing a change of variables $\wt{u}'  = P_{\rr / \{ j \}} u$, $u_0 = Q_{\{ j \}} u$ and performing the integral over $u_0$ we can rewrite $w^{(j) \tp} \del_u E_r (\IM; u) $ as 
\begin{equation}
\frac{2}{\norm{m^{(j)}}  }
\displaystyle \int\displaylimits_{\IR^{r-1}} \ddd^{r-1}  \wt{u}'
 \, e^{- \pi (P_{\rr / \{ j \}} u - \wt{u}' )^\tp (P_{\rr / \{ j \}} u - \wt{u}')} \, 
 e^{- \pi u^\tp Q_{\{ j \} }^\tp Q_{\{ j \} } u    } \,
\sign{\IM_{\rr / \{ j \}}^\tp P_{\rr / \{ j \}}^\tp \wt{u}'}   
\end{equation}
which finally can be written as
\begin{equation}
\frac{2}{\norm{m^{(j)}}  } \, 
e^{- \pi u^\tp Q_{\{ j \} }^\tp Q_{\{ j \} } u    }  \,
E_{r-1} (P_{\rr / \{ j \} }  \IM_{\rr / \{ j \} }; P_{\rr / \{ j \} } u)
\end{equation}
proving our assertion for $E_r (\IM; u)$.

\end{proof}

\begin{prop}\label{prop:M_E_shadow}
The shadows of $E_r$ and $M_r$ are given by
\begin{equation}
\frac{i}{4} u^\tp \del_u M_r (\IM; u) = \frac{i}{2}
\sum_{j=1}^r   \frac{m^{(j) \tp} u }{\norm{m^{(j)}}  }
\, e^{- \pi u^\tp Q_{\{ j \} }^\tp Q_{\{ j \} } u    }  \,
M_{r-1} (P_{\rr / \{ j \} }  \IM_{\rr / \{ j \} }; P_{\rr / \{ j \} } u),
\end{equation}
and
\begin{equation}
\frac{i}{4} u^\tp \del_u E_r (\IM; u) = \frac{i}{2}
\sum_{j=1}^r   \frac{m^{(j) \tp} u }{\norm{m^{(j)}}  }
\, e^{- \pi u^\tp Q_{\{ j \} }^\tp Q_{\{ j \} } u    } \,
E_{r-1} (P_{\rr / \{ j \} }  \IM_{\rr / \{ j \} }; P_{\rr / \{ j \} } u).
\end{equation}
\end{prop}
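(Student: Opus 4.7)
The claim is that both shadows are obtained from Lemma \ref{prop:M_E_derivative} by taking a weighted linear combination, so essentially all the work has already been done and only an algebraic identity is needed to pass from the single-variable derivatives $w^{(j)\tp}\del_u$ to the Euler-type combination $u^\tp\del_u$.

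My plan is to first rewrite $u$ in the basis $\{w^{(j)}\}$. Since $\IW^\tp \IM = I_r$, we have $\IW \IM^\tp = I_r$ as well, which gives the expansion $u = \IW \IM^\tp u = \sum_{j=1}^r (m^{(j)\tp} u) \, w^{(j)}$. Taking the dot product with $\del_u$ on both sides yields the operator identity
\begin{equation}
u^\tp \del_u = \sum_{j=1}^r (m^{(j)\tp} u) \, w^{(j)\tp} \del_u.
\end{equation}

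Next I would apply this decomposition to $M_r(\IM;u)$ and $E_r(\IM;u)$ and substitute the first-derivative formulas from Lemma \ref{prop:M_E_derivative} for each of the terms $w^{(j)\tp}\del_u M_r$ and $w^{(j)\tp}\del_u E_r$. Each substitution produces the Gaussian factor $e^{-\pi u^\tp Q_{\{j\}}^\tp Q_{\{j\}} u}$ times the appropriate $(r-1)$-dimensional error function, together with the prefactor $2/\norm{m^{(j)}}$. Multiplying through by $i/4$ converts the $2/\norm{m^{(j)}}$ into $1/(2\norm{m^{(j)}})$ and, absorbed with the weight $m^{(j)\tp}u$ from the expansion of $u$, produces exactly the stated sum with prefactor $(i/2)\, m^{(j)\tp}u/\norm{m^{(j)}}$.

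There is no significant obstacle: the content is entirely in Lemma \ref{prop:M_E_derivative}, and this proposition is just the observation that the Euler operator is a linear combination of the directional derivatives $w^{(j)\tp}\del_u$ with coefficients $m^{(j)\tp}u$, by duality of the bases $\{m^{(j)}\}$ and $\{w^{(j)}\}$. The only thing to double-check is the coefficient matching and that the same expansion works for both $M_r$ and $E_r$ simultaneously, which it does since Lemma \ref{prop:M_E_derivative} has the identical structural form in both cases.
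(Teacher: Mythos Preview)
Your proposal is correct and matches the paper's own proof essentially line for line: the paper inserts $\IM\IW^\tp = I_r$ between $u^\tp$ and $\del_u$ to obtain $u^\tp\del_u = \sum_j (m^{(j)\tp}u)\, w^{(j)\tp}\del_u$, then invokes Lemma~\ref{prop:M_E_derivative}, exactly as you do.
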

\begin{proof}
Using the fact that $\IW^\tp \IM = \IM \IW^\tp = I_r$ we have 
\begin{equation}
u^\tp \del_u M_r (\IM; u) = 
u^\tp  \IM \IW^\tp \del_u M_r (\IM; u) = \sum_{j=1}^r \lp m^{(j) \tp} u \rp \, w^{(j) \tp} \del_u M_r (\IM; u).
\end{equation}
Using Lemma \ref{prop:M_E_derivative} then proves our statement for $M_r$. The proof for $E_r$ is exactly the same.
\end{proof}

\begin{prop}\label{prop_M_E_Vigneras}
$M_r (\IM; u)$ and $E_r (\IM; u)$ solve Vign\'eras equation with $\l = 0$ for quadratic form $Q(u) = u^\tp u$ on their domain of definition. In other words,
\begin{equation}
\sum_{j=1}^r \lp \del_{u_j}^2 + 2 \pi u_j \del_{u_j} \rp M_r (\IM; u) = 0
\andd
\sum_{j=1}^r \lp \del_{u_j}^2 + 2 \pi u_j \del_{u_j} \rp E_r (\IM; u) = 0.
\end{equation}
\end{prop}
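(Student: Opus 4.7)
The plan is to reduce the Vign\'eras equation to a sum of per-direction contributions, each of which vanishes identically by a short calculation. The key input is Lemma \ref{prop:M_E_derivative}, which gives closed-form expressions for the directional derivatives $w^{(j)\tp}\del_u M_r$ and $w^{(j)\tp}\del_u E_r$.

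Since $\IM \IW^\tp = I_r$, we have the operator identity $\del_u = \sum_{j=1}^r m^{(j)} \lp w^{(j)\tp} \del_u \rp$. Applying it to $M_r(\IM; u)$ and substituting from Lemma \ref{prop:M_E_derivative} yields the decomposition $\del_u M_r(\IM; u) = \sum_{j=1}^r m^{(j)} \, g_j(u)$, where
\[
g_j(u) \equiv \frac{2}{\norm{m^{(j)}}} \, e^{-\pi u^\tp Q_{\{j\}}^\tp Q_{\{j\}} u} \, M_{r-1}\lp P_{\rr / \{ j \}} \IM_{\rr / \{ j \}}; P_{\rr / \{ j \}} u \rp.
\]
Taking the divergence of this expression (the $m^{(j)}$ being constant vectors) gives $\sum_j \del_{u_j}^2 M_r = \sum_j m^{(j)\tp} \del_u g_j$, and the same decomposition yields $u^\tp \del_u M_r = \sum_j (m^{(j)\tp} u) g_j$. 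The Vign\'eras equation therefore reads
\[
\sum_{j=1}^r \lb m^{(j)\tp} \del_u + 2\pi \lp m^{(j)\tp} u \rp \rb g_j(u) = 0,
\]
so it suffices to show each bracket vanishes individually.

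For the per-$j$ cancellation, I would observe two facts. First, $u^\tp Q_{\{j\}}^\tp Q_{\{j\}} u = (m^{(j)\tp} u)^2 / \norm{m^{(j)}}^2$, so a direct differentiation gives $m^{(j)\tp} \del_u e^{-\pi u^\tp Q_{\{j\}}^\tp Q_{\{j\}} u} = -2\pi (m^{(j)\tp} u) \, e^{-\pi u^\tp Q_{\{j\}}^\tp Q_{\{j\}} u}$, which precisely cancels the multiplicative $2\pi (m^{(j)\tp} u)$ term when the Leibniz rule is applied to $g_j$. Second, the $M_{r-1}$ factor in $g_j$ depends on $u$ only through $P_{\rr / \{ j \}} u$, and differentiating along $m^{(j)}$ introduces a factor of $P_{\rr / \{ j \}} m^{(j)}$, which vanishes by property 3 of the $P_S$ matrices. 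Hence each bracket is zero, and summing over $j$ gives the result. The argument for $E_r$ is identical, using the second line of Lemma \ref{prop:M_E_derivative}. I do not anticipate a genuine obstacle; the only minor point is that for $M_r$ one works on the $\mathcal{C}^\infty$ locus $w^{(j)\tp} u \neq 0$ guaranteed by Proposition \ref{prop:Mr_discontunity}, while for $E_r$ smoothness is global by the remark following Definition \ref{def:Er}.
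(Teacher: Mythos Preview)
Your proof is correct and is essentially the paper's own argument for $E_r$, applied uniformly to both functions: the paper likewise writes $\sum_j \del_{u_j}^2 = \sum_j m^{(j)\tp}\del_u\,(w^{(j)\tp}\del_u)$, invokes Lemma \ref{prop:M_E_derivative}, and cancels each $j$-term using $P_{\rr/\{j\}} m^{(j)}=0$ together with $m^{(j)\tp}\del_u e^{-\pi u^\tp Q_{\{j\}}^\tp Q_{\{j\}} u} = -2\pi (m^{(j)\tp}u)\,e^{-\pi u^\tp Q_{\{j\}}^\tp Q_{\{j\}} u}$. The only addition in the paper is a separate first proof for $M_r$ directly from its integral definition (using $(\del_{u_j}^2+2\pi u_j\del_{u_j})e^{-\pi z^\tp z-2\pi i z^\tp u}=2\pi z_j\del_{z_j}e^{-\pi z^\tp z-2\pi i z^\tp u}$ and integrating by parts), but it explicitly notes that the derivative-based argument you give works for $M_r$ as well.
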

\begin{proof}
We start with the case for $M_r (\IM; u)$. Since 
\begin{equation}
\lp \del_{u_j}^2 + 2 \pi u_j \del_{u_j} \rp 
e^{-\pi z^\tp z - 2 \pi i z^\tp u} = 2 \pi z_j \del_{z_j}  e^{-\pi z^\tp z - 2 \pi i z^\tp u}
\end{equation}
we get
\begin{align}
&\sum_{j=1}^r \lp \del_{u_j}^2 + 2 \pi u_j \del_{u_j} \rp M_r (\IM; u) 
\notag \\
&\quad\qquad =
 \lp \frac{i}{\pi} \rp^r  \abs{\det \IM}^{-1}  \int\displaylimits_{\IR^r - i u} \ddd^r z \,
\frac{2 \pi}{\prd \lp \IW^\tp z \rp}
\sum_{j=1}^r  z_j \del_{z_j}
e^{-\pi z^\tp z - 2 \pi i z^\tp u}.
\end{align}
Integrating by parts and noting that $\sum_{j=1}^r   \del_{z_j} \lp \frac{z_j}{\prd \lp \IW^\tp z \rp} \rp = 0$ proves the statement for $M_r$.

For $E_r(\IM; u)$ we first note that by Proposition \ref{prop:M_E_shadow} we have
\begin{equation}\label{eq:prop_M_E_Vigneras}
2 \pi  u^\tp \del_u E_r (\IM; u) = 4 \pi
\sum_{j=1}^r   \frac{m^{(j) \tp} u }{\norm{m^{(j)}}  }
\, e^{- \pi u^\tp Q_{\{ j \} }^\tp Q_{\{ j \} } u    } \,
E_{r-1} (P_{\rr / \{ j \} }  \IM_{\rr / \{ j \} }; P_{\rr / \{ j \} } u).
\end{equation}
Next we note that by Lemma \ref{prop:M_E_derivative} we have
\begin{align}
\sum_{j=1}^r \del_{u_j} &\del_{u_j} E_r (\IM; u) =
\sum_{j=1}^r m^{(j) \tp} \del_u \lp  w^{(j) \tp} \del_u \, E_r (\IM; u)  \rp \\
&= \sum_{j=1}^r m^{(j) \tp} \del_u  \lp  
\frac{2}{\norm{m^{(j)}}  }
e^{- \pi u^\tp Q_{\{ j \} }^\tp Q_{\{ j \} } u    } 
E_{r-1} (P_{\rr / \{ j \} }  \IM_{\rr / \{ j \} }; P_{\rr / \{ j \} } u)
\rp.
\end{align}
Since $P_{\rr / \{ j \} } m^{(j)} = 0$ we only need 
\begin{equation}
m^{(j) \tp} \del_u  \lp
e^{- \pi u^\tp Q_{\{ j \} }^\tp Q_{\{ j \} } u    } \rp =  - 2 \pi \lp m^{(j) \tp} u \rp
e^{- \pi u^\tp Q_{\{ j \} }^\tp Q_{\{ j \} } u    }
\end{equation}
to find
\begin{align}
\sum_{j=1}^r \del_{u_j} \del_{u_j} &E_r (\IM; u)
\notag \\
&= - 4 \pi
\sum_{j=1}^r   \frac{m^{(j) \tp} u }{\norm{m^{(j)}}  }
\, e^{- \pi u^\tp Q_{\{ j \} }^\tp Q_{\{ j \} } u    } \,
E_{r-1} (P_{\rr / \{ j \} }  \IM_{\rr / \{ j \} }; P_{\rr / \{ j \} } u)
\end{align}
canceling the contribution from equation \eqref{eq:prop_M_E_Vigneras} and proving $\sum_{j=1}^r \lp \del_{u_j}^2 + 2 \pi u_j \del_{u_j} \rp E_r (\IM; u)$ vanishes. Note that the same proof can also be used for $M_r(\IM; u)$ giving a second proof for the statement for $M_r(\IM; u)$.
\end{proof}

\begin{prop}\label{prop_M_bound}
$M_r (\IM; u)$ is uniformly bounded as $\abs{  M_r (\IM; u) } \leq (r!) \, e^{- \pi u^\tp u}$.
\end{prop}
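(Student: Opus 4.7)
The plan is induction on $r$. For the base case $r = 1$, Proposition \ref{prop:Mr_basic_property}(a) combined with the footnote after Definition \ref{def:Er} gives $M_1(m; u) = -\sign{mu}\, \mathrm{erfc}(\sqrt{\pi}|u|)$, whose magnitude is bounded by $e^{-\pi u^2}$ via the elementary inequality $\mathrm{erfc}(x) \leq e^{-x^2}$ for $x \geq 0$.

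For the inductive step ($r \geq 2$), fix $u_0$ with $w^{(j)\tp} u_0 \neq 0$ for all $j$ and consider $f(t) \equiv M_r(\IM; tu_0)$ on $t \in [1, \infty)$. The function $f$ is smooth, and $f(t) \to 0$ as $t \to \infty$ by the asymptotic noted in the remark preceding Lemma \ref{prop:M_E_derivative} (alternatively, the defining contour integral together with $|w^{(j)\tp}(x - itu_0)| \geq t|w^{(j)\tp} u_0|$ delivers polynomial decay in $t$ on top of the factor $e^{-\pi t^2 u_0^\tp u_0}$). Hence $M_r(\IM; u_0) = -\int_1^\infty f'(t)\, dt$. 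Writing $f'(t) = \tfrac{1}{t}\,(tu_0)^\tp (\partial_u M_r)(\IM; tu_0)$ and invoking the shadow formula of Proposition \ref{prop:M_E_shadow} at $u = tu_0$ produces
\[
f'(t) = 2 \sum_{j=1}^r \frac{m^{(j)\tp} u_0}{\norm{m^{(j)}}}\, e^{-\pi t^2 \norm{Q_{\{j\}} u_0}^2}\, M_{r-1}\bigl(P_{\rr/\{j\}} \IM_{\rr/\{j\}};\, t P_{\rr/\{j\}} u_0\bigr).
\]

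Apply the induction hypothesis to each $M_{r-1}$ factor, bounding it by $(r-1)!\, e^{-\pi t^2 \norm{P_{\rr/\{j\}} u_0}^2}$, and combine the two exponentials via the orthogonal decomposition $\norm{Q_{\{j\}} u_0}^2 + \norm{P_{\rr/\{j\}} u_0}^2 = \norm{u_0}^2$, which follows from $\mat{Q_{\{j\}} \\ P_{\rr/\{j\}}} \in \O(r,\IR)$. Together with Cauchy-Schwarz $|m^{(j)\tp} u_0| \leq \norm{m^{(j)}}\,\norm{u_0}$ on each of the $r$ summands, this yields $|f'(t)| \leq 2\, r!\, \norm{u_0}\, e^{-\pi t^2 \norm{u_0}^2}$. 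Integrating and using the Gaussian tail
\[
\int_1^\infty e^{-\pi t^2 \norm{u_0}^2}\, dt = \frac{\mathrm{erfc}(\sqrt{\pi}\norm{u_0})}{2\norm{u_0}} \leq \frac{e^{-\pi \norm{u_0}^2}}{2\norm{u_0}}
\]
then produces exactly $|M_r(\IM; u_0)| \leq r!\, e^{-\pi \norm{u_0}^2}$. The argument is essentially lossless: the three loose steps (Cauchy-Schwarz, the orthogonal decomposition, and $\mathrm{erfc}(x) \leq e^{-x^2}$) combine to convert $(r-1)!$ into precisely $r!$ with no slack. The main obstacle I expect is choosing a parametrization under which these inequalities align so tightly; radial scaling $u_0 \mapsto tu_0$ works because the $u^\tp \partial_u$ structure in the shadow formula supplies the factor of $t$ that cancels against $dt/t$ and leaves the Gaussians free to recombine. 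A secondary technical point is justifying $f(\infty) = 0$ for the FTC representation, after which the bound extends to the discontinuity loci $w^{(j)\tp} u = 0$ by one-sided limits using Proposition \ref{prop:Mr_discontunity} (noting that $\norm{Q_S u} = \norm{u}$ when $u$ lies on the locus labeled by $S$).
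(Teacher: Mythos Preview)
Your proof is correct and follows essentially the same approach as the paper: induction on $r$ via the radial derivative from Proposition~\ref{prop:M_E_shadow}, integrating from $t=1$ to $\infty$, applying the inductive bound on $M_{r-1}$, recombining the Gaussians through the orthogonal splitting $\norm{Q_{\{j\}}u}^2 + \norm{P_{[r]/\{j\}}u}^2 = \norm{u}^2$, and finishing with $\mathrm{erfc}(x)\le e^{-x^2}$. Your write-up is in fact slightly more careful than the paper's (you keep track of the sign in the fundamental-theorem-of-calculus step and explicitly address the extension to the loci $w^{(j)\tp}u=0$), but the argument is the same.
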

\begin{proof}
We will use induction to prove the statement. For $r=1$ we have $\abs{M_1 (\IM; u)} = \abs{\mathrm{erfc} \lp \abs{u} \sqrt{\pi} \rp} \leq e^{-\pi u^2}$ establishing the base case. Now we assume the hypothesis holds for $M_{r-1}$ to prove the inductive step. By Proposition \ref{prop:M_E_shadow} we have
\begin{equation}
\frac{\ddd}{\ddd t} M_r (\IM; t u) = 
2 \sum_{j=1}^r   \frac{m^{(j) \tp} u }{\norm{m^{(j)}}  }
\, e^{- \pi t^2 u^\tp Q_{\{ j \} }^\tp Q_{\{ j \} } u    }  \,
M_{r-1} (P_{\rr / \{ j \} }  \IM_{\rr / \{ j \} }; t P_{\rr / \{ j \} } u).
\end{equation}
Integrating from $t=1$ to $\infty$ and noting that $M_r (\IM; t u) \to 0$ as $t \to \infty$ we have
\begin{equation}
M_r (\IM;  u) = 
2 \sum_{j=1}^r   \frac{m^{(j) \tp} u }{\norm{m^{(j)}}  }
\, \int_1^\infty \ddd t \, e^{- \pi t^2 u^\tp Q_{\{ j \} }^\tp Q_{\{ j \} } u    }  \,
M_{r-1} (P_{\rr / \{ j \} }  \IM_{\rr / \{ j \} }; t P_{\rr / \{ j \} } u).
\end{equation}
By the induction hypothesis
\begin{align}
\abs{  M_r (\IM; u) } &\leq 2 (r-1)!  \sum_{j=1}^r  \abs{ \frac{m^{(j) \tp} u }{\norm{m^{(j)}}  } }
\, \int_1^\infty \ddd t \, e^{- \pi t^2 u^\tp Q_{\{ j \} }^\tp Q_{\{ j \} } u    }
e^{- \pi t^2 u^\tp P_{\rr / \{ j \} }^\tp P_{\rr / \{ j \} } u    }   \\
&= (r-1)!  \sum_{j=1}^r \frac{\abs{Q_{\{ j \} } u }}{\sqrt{u^\tp u}} \, 
		\mathrm{erfc} \lp \sqrt{\pi u^\tp u} \rp.
\end{align}
Using $\frac{\abs{Q_{\{ j \} } u }}{\sqrt{u^\tp u}} \leq 1$ and $\mathrm{erfc} \lp \sqrt{\pi u^\tp u} \rp \leq e^{-\pi u^\tp u}$ then gives the result.
\end{proof}

\begin{prop}\label{prop:E_decomposition_of_M}
On its domain of definition (which is $\prd \lp \IW^\tp u \rp \neq 0$) the function $M_r (\IM; u)$  can be decomposed as
\begin{equation}
M_r (\IM; u) = \sum_{S \sbst \rr } (-1)^{r-\abs{S}} \, 
\sign{\IW_{\rr / S}^\tp u} \, E_{\abs{S}} \lp Q_S \IM_S; Q_S u \rp.
\end{equation}
\end{prop}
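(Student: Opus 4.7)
The plan is to prove $F_r = M_r$ on the open set $\mathcal U \equiv \{u \in \IR^r : \prd(\IW^\tp u) \neq 0\}$, where $F_r$ denotes the right-hand side, via a two-step strategy: first show that $\del_u F_r = \del_u M_r$ throughout $\mathcal U$, which forces $F_r - M_r$ to be constant on each connected component of $\mathcal U$; then pin down that constant to zero by analyzing the behavior along rays going to infinity within each component.

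For the gradient-matching step I would argue by induction on $r$. The base case $r=1$ is a direct rearrangement of $M_1 = -\sign{m}\sign{u}\,\mathrm{erfc}(\abs{u}\sqrt\pi)$ as $E_1(m;u) - \sign{m^{-1}u}$. For the inductive step, since the sign factors in $F_r$ are locally constant on $\mathcal U$,
$$w^{(j)\tp}\del_u F_r = \sum_{S \sbst \rr}(-1)^{r-\abs{S}}\sign{\IW^\tp_{\rr/S}u}\, w^{(j)\tp}\del_u E_{\abs{S}}(Q_S\IM_S; Q_S u),$$
and the property $Q_S w^{(j)} = 0$ for $j \notin S$ collapses the sum to terms with $S \ni j$. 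For such $S$, the identity $(Q_S\IM_S)^{-\tp} = Q_S\IW_S$ identifies $Q_S w^{(j)}$ as the dual basis vector corresponding to $j$ in the reduced system $Q_S\IM_S$, so Lemma \ref{prop:M_E_derivative} applied internally yields a factor of $E_{\abs{S}-1}$ built from the reduced data together with the scalar $\frac{2}{\norm{m^{(j)}}} e^{-\pi u^\tp Q_{\{j\}}^\tp Q_{\{j\}} u}$ that also prefixes $w^{(j)\tp}\del_u M_r$ through Lemma \ref{prop:M_E_derivative}. Reindexing by $S' = S/\{j\} \sbst \rr/\{j\}$ and invoking the inductive hypothesis on $M_{r-1}(P_{\rr/\{j\}}\IM_{\rr/\{j\}}; P_{\rr/\{j\}} u)$ reduces the matching to the geometric fact that the two procedures of projecting $u$ first via $Q_{S'\cup\{j\}}$ onto $\mathrm{span}(m^{(k)}: k \in S'\cup\{j\})$ and then onto the orthocomplement of $Q_{S'\cup\{j\}} m^{(j)}$, versus projecting first via $P_{\rr/\{j\}}$ onto $m^{(j)\perp}$ and then onto $\mathrm{span}(P_{\rr/\{j\}} m^{(k)}: k \in S')$, produce orthonormal parametrizations of the same $\abs{S'}$-dimensional subspace $\mathrm{span}(m^{(k)}: k \in S'\cup\{j\}) \cap m^{(j)\perp}$ and therefore yield equal values of $E_{\abs{S'}}$ by Proposition \ref{prop:Mr_basic_property}(b).

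For the asymptotic step, fix a connected component $C$ of $\mathcal U$; since $C$ is an open cone, pick a ray $u = t\hat u$, $t \to \infty$, with $\hat u$ chosen generically so that $m^{(j)\tp}\hat u \neq 0$ and $w^{(j)\tp}\hat u \neq 0$ for every $j$. Proposition \ref{prop_M_bound} gives $\abs{M_r(\IM; t\hat u)} \leq r!\, e^{-\pi t^2 \hat u^\tp \hat u} \to 0$, while each $E_{\abs{S}}(Q_S\IM_S; tQ_S\hat u)$ tends to $\sign{(Q_S\IM_S)^\tp Q_S \hat u} = \sign{\IM_S^\tp \hat u}$, so
$$\lim_{t\to\infty} F_r(\IM; t\hat u) = \sum_{S \sbst \rr}(-1)^{r-\abs{S}}\sign{\IW^\tp_{\rr/S}\hat u}\sign{\IM_S^\tp\hat u} = \prd_{j=1}^r\lp\sign{m^{(j)\tp}\hat u} - \sign{w^{(j)\tp}\hat u}\rp.$$
This product vanishes, since otherwise $(m^{(j)\tp}\hat u)(w^{(j)\tp}\hat u)<0$ would hold for every $j$, yet summing gives $\hat u^\tp(\IM\IW^\tp)\hat u = \hat u^\tp \hat u > 0$, a contradiction. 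Hence $F_r - M_r \to 0$ along this ray, and being constant on $C$ it must vanish identically on $C$.

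The main obstacle will be the linear-algebraic bookkeeping in the inductive step: identifying the internal projectors produced by applying Lemma \ref{prop:M_E_derivative} inside $E_{\abs{S}}(Q_S\IM_S; \cdot)$ with the compositions of ambient projectors appearing through the inductive hypothesis for $M_{r-1}(P_{\rr/\{j\}}\IM_{\rr/\{j\}}; \cdot)$. Everything reduces to the orthogonality relations collected after Definition \ref{def:Er} and to Proposition \ref{prop:Mr_basic_property}, but the combinatorics are what do the substantive work.
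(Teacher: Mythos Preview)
Your approach is correct and genuinely different from the paper's. The paper proves the decomposition \emph{directly} from the integral representation \eqref{eq:def_Mr}: after the change of variables $v_j = w^{(j)\tp} z$, it pushes the contour from $\IR^r - i\IW^\tp u$ to $\IR^r$, invokes the Sokhotski--Plemelj identity $\frac{1}{v_j - i0^+\sign{w^{(j)\tp}u}} = \pv(1/v_j) + i\pi\,\sign{w^{(j)\tp}u}\,\d(v_j)$ factor by factor, and then identifies the resulting principal-value pieces as $E_{\abs{S}}$ via the Fourier transform of $\mathrm{sign}$. This route is self-contained and explains structurally why the subsets $S$ appear: they index which poles are picked up and which are principal-valued.

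Your route is \emph{indirect}: you match $w^{(j)\tp}\del_u$ on both sides by induction (using Lemma \ref{prop:M_E_derivative} twice, once on $M_r$ and once inside each $E_{\abs{S}}$, together with the projector identity that both $Q_{S'\cup\{j\}}$-then-``mod $m^{(j)}$'' and $P_{\rr/\{j\}}$-then-$Q_{S'}$ land in the same orthogonal frame of $\langle m^{(k)}: k\in S'\cup\{j\}\rangle\cap m^{(j)\perp}$), and then kill the locally constant discrepancy by letting $t\to\infty$ along a generic ray in each chamber. The asymptotic step is elegant: the limit of the right-hand side factorizes as $\prod_j\bigl(\sign{m^{(j)\tp}\hat u}-\sign{w^{(j)\tp}\hat u}\bigr)$, and your observation that this product must vanish because $\sum_j (m^{(j)\tp}\hat u)(w^{(j)\tp}\hat u)=\hat u^\tp\IM\IW^\tp\hat u=\norm{\hat u}^2>0$ is a clean algebraic trick not present in the paper. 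What your method buys is that it avoids contour shifting and distributional identities entirely, staying within real-variable techniques; the cost is that it relies on Proposition \ref{prop_M_bound} and on the somewhat heavy projector bookkeeping you flag yourself, whereas the paper's argument needs neither and makes the combinatorics of the sum over $S$ emerge automatically from expanding a product of $r$ Plemelj factors.
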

\begin{proof}
We start with a change of variables $v_j = w^{(j) \tp} z$ (so that $\abs{\frac{\del v}{\del z}} = \abs{\det \IW} = \abs{\det \IM}^{-1}$) in the integral defining $M_r (\IM; u)$ (see equation \eqref{eq:def_Mr}) to get
\begin{equation}
M_r (\IM; u) = 
 \lp \frac{i}{\pi} \rp^r    \int\displaylimits_{\IR^r - i \IW^\tp u} \ddd^r v \,
\frac{e^{-\pi v^\tp \IM^\tp \IM v - 2 \pi i u^\tp \IM v}}{\prod_{j=1}^r v_j}.
\end{equation} 
Now we deform the integration contours without crossing any poles:
\begin{equation}
M_r (\IM; u) = 
 \lp \frac{i}{\pi} \rp^r    \lim\displaylimits_{\e_j \to 0^+}  \int\displaylimits_{\IR^r} \ddd^r v \,
\frac{e^{-\pi v^\tp \IM^\tp \IM v - 2 \pi i u^\tp \IM v}}{\prod_{j=1}^r \lp  v_j  - i \e_j \,  \sign{w^{(j) \tp} u} \rp}.
\end{equation}
Under the integral and in the limit $\e_j \to 0^+$ we can replace $\frac{1}{v_j  - i \e_j \,  \sign{w^{(j) \tp} u}}$ factors with $\pv \lp \frac{1}{v_j} \rp + i \pi \, \sign{w^{(j) \tp} u} \, \d(v_j)$ and rewrite $M_r (\IM; u)$ as
\begin{align}
 \lp \frac{i}{\pi} \rp^r  &\int\displaylimits_{\IR^r} \ddd^r v \, 
 e^{-\pi v^\tp \IM^\tp \IM v - 2 \pi i u^\tp \IM v} 
 \notag \\
&\qquad
\times  \sum_{S \sbst \rr} \lp \prod_{j \in S} \pv \lp \frac{1}{v_j} \rp  \rp 
  \lp   \prod_{j \in \rr / S} \lb  i \pi \, \sign{w^{(j) \tp} u} \, \d(v_j) \rb  \rp.
\end{align}
Taking the integrals over $v_{\rr / S}$ using the delta functions we then have
\begin{equation}
\sum_{S \sbst \rr} (-1)^{r-\abs{S}} \,  \sign{\IW_{\rr / S}^\tp u} \,
\lp \frac{i}{\pi} \rp^{\abs{S}} \int\displaylimits_{\IR^{\abs{S}}} \ddd^{\abs{S}} v_S \,
\prod_{j \in S} \pv \lp \frac{1}{v_j} \rp \,
 e^{-\pi v_S^\tp \IM_S^\tp \IM_S v_S - 2 \pi i u^\tp \IM_S v_S}.
\end{equation}
Using the fact that $\frac{i}{\pi} \pv \lp \frac{1}{k} \rp$ is the Fourier transform of $\sign{x}$ and that $Q_S^\tp Q_S \IM_S = \IM_S$ we can rewrite the integral $\displaystyle \lp \frac{i}{\pi} \rp^{\abs{S}} \int\displaylimits_{\IR^{\abs{S}}} \ddd^{\abs{S}} v_S \,
\prod_{j \in S} \pv \lp \frac{1}{v_j} \rp \,
 e^{-\pi v_S^\tp \IM_S^\tp \IM_S v_S - 2 \pi i u^\tp \IM_S v_S}$ as
\begin{equation}
\int\displaylimits_{\IR^{\abs{S}}} \ddd^{\abs{S}} x
\int\displaylimits_{\IR^{\abs{S}}} \ddd^{\abs{S}} v_S \, \sign{x} \, e^{2 \pi i x^\tp v_S} \,
 e^{-\pi v_S^\tp \IM_S^\tp Q_S^\tp Q_S \IM_S v_S - 2 \pi i u^\tp Q_S^\tp Q_S \IM_S v_S}.
\end{equation}
Performing a change of variables $\wt{v}_S = Q_S \IM_S v_S$ and $\wt{x} = (Q_S \IM_S)^{- \tp} x$ (for which the Jacobian is unity) we obtain
\begin{equation}
\int\displaylimits_{\IR^{\abs{S}}} \ddd^{\abs{S}} \wt{x} 
\int\displaylimits_{\IR^{\abs{S}}} \ddd^{\abs{S}} \wt{v}_S \  \sign{\IM_S^\tp Q_S^\tp \wt{x}} \, 
e^{2 \pi i \wt{x}^\tp \wt{v}_S} \,
 e^{-\pi \wt{v}_S^\tp \wt{v}_S} \, e^{- 2 \pi i u^\tp Q_S^\tp \wt{v}_S}.
\end{equation}
Finally performing the Gaussian integral over $\wt{v}_S$ we find
\begin{equation}
\int\displaylimits_{\IR^{\abs{S}}} \ddd^{\abs{S}} \wt{x}  \ 
\sign{\IM_S^\tp Q_S^\tp \wt{x}} \, e^{- \pi (Q_S u - \wt{x})^\tp (Q_S u - \wt{x})} = E_{\abs{S}} \lp Q_S \IM_S; Q_S u \rp
\end{equation}
finishing the proof.
\end{proof}

The decomposition given in Proposition \ref{prop:E_decomposition_of_M} implies that one can conversely decompose $E_r (\IM; u)$ in terms of $M_r$ functions. Before giving this result we state a lemma that we will need in establishing that decomposition.

\begin{lem}\label{lem:signature_lemma}
For any $n \times n$ real positive definite matrix $G$ and any $v \in \IR^{n \times 1}$ such that 
$\displaystyle\prod_{S \sbst [n] } \lp \prd  \lb \mat{- G_{S,S}^{-1} & 0 \\ - G_{[n]/S,S}  \, G_{S,S}^{-1} & I_{n-\abs{S}}}   \mat{v_S \\ v_{[n]/S}} \rb \rp  \neq 0$ we have
\begin{equation}
\sum_{S \sbst [n] } \sign{  \mat{- G_{S,S}^{-1} & 0 \\ - G_{[n]/S,S} \,  G_{S,S}^{-1} & I_{n-\abs{S}}}   \mat{v_S \\ v_{[n]/S}}  } = 0.
\end{equation}
\end{lem}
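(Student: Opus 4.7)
The plan is to pair the $2^n$ summands via the involution $S \leftrightarrow S \triangle \{j\}$ on subsets and show that this pairing makes the function $F(v) := \sum_{S \sbst [n]} \sgn(M_S v)$ locally constant on its domain of definition, after which a reduction to $G = I_n$ finishes the argument. Here $M_S := \pmat{-G_{S,S}^{-1} & 0 \\ -G_{[n]/S,S}\,G_{S,S}^{-1} & I_{n-|S|}}$ denotes the matrix appearing in the statement.

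First I would establish a pair of Schur-complement identities. Given $S \sbst [n]$, $j \in [n]/S$, set $S' := S \cup \{j\}$ and let $\tilde G^S := G_{[n]/S,[n]/S} - G_{[n]/S,S}\,G_{S,S}^{-1}\,G_{S,[n]/S}$ be the Schur complement of $G_{S,S}$ in $G$; it inherits positive-definiteness from $G$, so $\tilde G^S_{jj} > 0$. Block inversion of $G_{S',S'} = \pmat{G_{S,S} & G_{S,j} \\ G_{j,S} & G_{j,j}}$ yields, as identities of linear forms in $v$,
\begin{equation*}
(M_{S'} v)_j = -\frac{(M_S v)_j}{\tilde G^S_{jj}},
\qquad
(M_{S'} v)_k - (M_S v)_k = c^{S,j}_k \cdot (M_S v)_j \quad (k \ne j),
\end{equation*}
for scalars $c^{S,j}_k$ depending only on $S,j,k,G$. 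Consequently, $M_S v$ and $M_{S'} v$ agree on the hyperplane $L_{S,j} := \{v : (M_S v)_j = 0\}$, and their $j$-th components have opposite signs off of it.

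Next I would use these identities to show that $F$ has no jump across any hyperplane of discontinuity. Every such hyperplane is of the form $L_{S,j}$ and coincides with $L_{S',j}$ for the partner $S' = S \triangle \{j\}$. Crossing $L_{S,j}$, both $\sgn((M_S v)_j)$ and $\sgn((M_{S'} v)_j)$ flip but in opposite directions, while the remaining $n-1$ sign factors of $\sgn(M_S v)$ coincide with those of $\sgn(M_{S'} v)$ in a neighborhood of a generic point of $L_{S,j}$ by the second identity. A short arithmetic check then shows that the jumps of $\sgn(M_S v)$ and $\sgn(M_{S'} v)$ cancel exactly, so $F$ is continuous across $L_{S,j}$. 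Hence $F$ extends to a locally constant function on the dense open subset of $\IR^n \times \{G \text{ positive definite}\}$ where all sign arguments are non-zero.

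Finally, connecting $G$ to $I_n$ by a generic path inside the positive-definite cone that avoids the codimension-$\ge 2$ singular strata, the locally-constant property gives $F(v, G) = F(v, I_n)$. At $G = I_n$ one has $M_S v = (-v_S,\,v_{[n]/S})$, so $\sgn(M_S v) = (-1)^{|S|} \prod_j \sgn(v_j)$ and $F(v, I_n) = \prod_j \sgn(v_j) \cdot \sum_{S \sbst [n]} (-1)^{|S|} = 0$. I expect the main obstacle to lie in the bookkeeping of the pairwise jump cancellation: at a smooth point of $L_{S,j}$ the pair $(S, S')$ accounts for all local jumps, but one must verify that no other summand $\sgn(M_T v)$ with $T \ne S, S'$ happens to jump simultaneously across $L_{S,j}$. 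This is a generic-position (codimension-$\ge 2$) phenomenon, which is precisely why the final step uses a generic path through the positive-definite cone.
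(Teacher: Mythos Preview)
Your argument is correct and shares its essential core with the paper's proof: both pair $S$ with $S\cup\{j\}$, establish the same Schur-complement identities relating $M_S v$ and $M_{S\cup\{j\}}v$ (the paper writes out exactly the block inversion you sketch), and use them to show that the jumps of the two paired summands across each hyperplane $L_{S,j}$ cancel, so the total sum is locally constant.

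Where you diverge is in the endgame. The paper keeps $G$ fixed, specializes to $j=n$, and pushes $v_n$ large enough that every factor $\sign{v_n - G_{n,S}G_{S,S}^{-1}v_S}$ becomes $+1$; the remaining sum then splits as a difference of two $(n-1)$-dimensional instances of the same identity (one for $G_{[n-1],[n-1]}$ and one for its Schur complement $\tilde G$), and induction on $n$ finishes. You instead deform $G$ through the positive-definite cone to $I_n$ and evaluate directly via $\sum_{S}(-1)^{|S|}=0$. Your route avoids the inductive bookkeeping and is conceptually cleaner, at the price of needing a connectedness argument for the complement of a codimension-$\ge 2$ locus; the paper's route stays in $v$-space and only moves one coordinate, so its path argument is more elementary. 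One small refinement: rather than fixing $v$ and varying $G$ alone (where the bad locus restricted to the $G$-slice need not have codimension $\ge 2$ for every particular $v$), it is cleaner to run the connectedness argument in the full $(v,G)$-space, where the codimension bound is automatic once you have checked that $H_{S,j}=H_{T,\ell}$ as hypersurfaces only when $(T,\ell)\in\{(S,j),(S\triangle\{j\},j)\}$.
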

\begin{proof}
See Appendix \ref{appendixA} for the proof.
\end{proof}

Taking $G_{S,T} = \IW_S^\tp \IW_T$ and $v_S = \IW_S^\tp u$ in Lemma \ref{lem:signature_lemma} and noting that 
\begin{equation}
\IM_S^\tp P_S^\tp P_S u = \lp  \IW_S^\tp \IW_S \rp^{-1}  \IW_S^\tp u,
\end{equation}
and
\begin{equation}
\IW_{N / S}^\tp Q_{\rr / S}^\tp Q_{\rr / S} u  = \IW_{N / S}^\tp u  - \lp \IW_{N / S}^\tp \IW_S \rp \lp  \IW_S^\tp \IW_S \rp^{-1}  \IW_S^\tp u
\end{equation}
we find 
\begin{equation}\label{eq:sign_lemma}
\sum_{S: S \sbst N} (-1)^{\abs{S}}  \, \sign{\IM_S^\tp P_S^\tp P_S u} \, 
\sign{\IW_{N / S}^\tp Q_{\rr / S}^\tp Q_{\rr / S} u}  = 0
\end{equation}
for all non-empty subsets $N$ of $\rr$ and for all $u$ such that the arguments of $\mathrm{sign}$ functions are non-zero.

\begin{prop}\label{prop:M_decomposition_of_E}
For any $u$ such that $\displaystyle\prod_{S \sbst [r] } \lb \prd \mat{ \IW_S^\tp Q_S^\tp Q_S u  \\ \IM_{\rr / S}^\tp P_{\rr / S}^\tp P_{\rr / S} u  } \rb \neq 0  $ we have
\begin{equation}
E_r (\IM; u) = \sum_{S \sbst \rr } 
\sign{\IM_{\rr / S}^\tp P_{\rr / S}^\tp P_{\rr / S} u} \, M_{\abs{S}} \lp Q_S \IM_S; Q_S u \rp.
\end{equation}
\end{prop}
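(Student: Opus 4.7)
The plan is to invert the decomposition in Proposition \ref{prop:E_decomposition_of_M} by direct substitution, then reindex the resulting double sum and apply the sign identity \eqref{eq:sign_lemma} (itself a consequence of Lemma \ref{lem:signature_lemma}) to collapse all but one contribution. In spirit this is a M\"obius inversion on the Boolean lattice of subsets of $\rr$, with \eqref{eq:sign_lemma} providing the nontrivial cancellation.

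The first step is to apply Proposition \ref{prop:E_decomposition_of_M} to each $M_{\abs{S}} \lp Q_S \IM_S; Q_S u \rp$ on the right-hand side of the claim, viewed now in the ambient space $\IR^{\abs{S}}$. The dual basis of $Q_S \IM_S$ in that space is $Q_S \IW_S$ (since $\lp Q_S \IM_S \rp^{-\tp} = Q_S \IW_S$), and the subspace projectors for $T \sbst S$ reduce to the ambient $Q_T$ via the composition law $Q_{T,S} Q_S = Q_T$. These identifications recast the expansion in purely ambient notation as
\[
M_{\abs{S}} \lp Q_S \IM_S; Q_S u \rp = \sum_{T \sbst S} (-1)^{\abs{S}-\abs{T}} \sign{\IW_{S/T}^\tp Q_S^\tp Q_S u}\, E_{\abs{T}} \lp Q_T \IM_T; Q_T u \rp.
\]
Substituting into the right-hand side of the proposition and exchanging the two summations via $U \equiv S \setminus T \sbst \rr / T$ (so that $\rr / S = (\rr / T) / U$ and $S / T = U$) yields
\[
\sum_{T \sbst \rr} E_{\abs{T}} \lp Q_T \IM_T; Q_T u \rp \sum_{U \sbst \rr / T} (-1)^{\abs{U}} \sign{\IM_{(\rr/T)/U}^\tp P_{(\rr/T)/U}^\tp P_{(\rr/T)/U} u} \sign{\IW_U^\tp Q_{T \cup U}^\tp Q_{T \cup U} u}.
\]

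For each $T$ the inner sum over $U$ is precisely the identity \eqref{eq:sign_lemma} applied with $N = \rr / T$ and reindexed by $S' = N / U$ (under which $\rr / S' = T \cup U$ and $N / S' = U$). Whenever $T \neq \rr$ the set $N$ is nonempty and the inner sum therefore vanishes identically on the generic locus assumed in the statement. The only surviving contribution is $T = \rr$, in which case $N$ and $U$ are empty, the sign factors collapse to the empty product $1$, and the remaining factor equals $E_r \lp Q_\rr \IM_\rr; Q_\rr u \rp = E_r (\IM; u)$, which proves the claim. The main technical hurdle is the bookkeeping in the substitution step: one must translate the dual basis and projectors intrinsic to $Q_S \IM_S$ back to the ambient objects $\IM_T$, $\IW_T$, $Q_T$, $P_{\rr / T}$ so that the inner sum over $U$ takes precisely the form required by \eqref{eq:sign_lemma}. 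Once that translation is in place via the listed composition properties of the $Q_{S,S'}$ and $P_{S,S'}$ matrices, the remainder of the argument is a combinatorial relabelling.
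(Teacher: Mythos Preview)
Your proof is correct and follows essentially the same approach as the paper: substitute the expansion of Proposition \ref{prop:E_decomposition_of_M} into each $M_{\abs{S}}$, interchange the order of summation, and use the sign identity \eqref{eq:sign_lemma} to kill all inner sums except the one with $T=\rr$. Your version is slightly more explicit in justifying why the intrinsic projectors and dual basis for $Q_S\IM_S$ translate back to the ambient $Q_T$, $\IW_T$ via $(Q_S\IM_S)^{-\tp}=Q_S\IW_S$ and $Q_{T,S}Q_S=Q_T$, which the paper leaves implicit.
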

Before going into the proof note that $M$ functions have discontinuities
\begin{equation}
M_{\abs{S}} \lp Q_S \IM_S; Q_S u \rp
\to 
(-1)^{\abs{S} - \abs{S'}} \,
\sign {\IW^\tp_{S/S'} Q_S^\tp Q_S u} \, M_{\abs{S'}} \lp Q_{S'} \IM_{S'}; Q_{S'} u \rp
\end{equation}
as $\IW^\tp_{S/S'} Q_S^\tp Q_S u \to 0$ where $S' \sbst S \sbst \rr$ by Proposition \ref{prop:Mr_discontunity}. So these discontinuities cancel if 
\begin{equation}
\sum_{S: N \sbst S \sbst \rr} (-1)^{\abs{S} - \abs{N}} \,
\sign {\IW^\tp_{S/N} Q_S^\tp Q_S u} \, \sign{\IM_{\rr / S}^\tp P_{\rr / S}^\tp P_{\rr / S} u} =0
\end{equation}
for all proper subsets $N$ of $\rr$. This in turn is ensured by Lemma \ref{lem:signature_lemma} and equation \eqref{eq:sign_lemma}. We now turn to the proof of Proposition \ref{prop:M_decomposition_of_E} to show the decomposition precisely:
\begin{proof}
Using Proposition \ref{prop:E_decomposition_of_M} we find $\displaystyle 
\sum_{S \sbst \rr } 
\sign{\IM_{\rr / S}^\tp P_{\rr / S}^\tp P_{\rr / S} u} \, M_{\abs{S}} \lp Q_S \IM_S; Q_S u \rp$
is equal to:
\begin{align}
\sum_{S: S \sbst \rr }  \sum_{N: N \sbst S } 
(-1)^{\abs{S} - \abs{N}} \,
&\sign {\IW^\tp_{S/N} Q_S^\tp Q_S u} \, \sign{\IM_{\rr / S}^\tp P_{\rr / S}^\tp P_{\rr / S} u}
\notag \\
&\qquad \times
E_{\abs{N}}  \lp Q_N \IM_N; Q_N u \rp.
\end{align}
Changing the order of sums gives
\begin{align}
\sum_{N: N \sbst \rr } & E_{\abs{N}}  \lp Q_N \IM_N; Q_N u \rp 
\notag \\
&\qquad \times
\sum_{S: N \sbst S \sbst \rr} 
(-1)^{\abs{S} - \abs{N}} \,
\sign {\IW^\tp_{S/N} Q_S^\tp Q_S u} \, \sign{\IM_{\rr / S}^\tp P_{\rr / S}^\tp P_{\rr / S} u} .
\end{align}
Then the sum over $S$ is zero by Lemma \ref{lem:signature_lemma} and equation \eqref{eq:sign_lemma} except for the case $N = \rr$ where it is unity. This then simply leaves $E_r ( \IM; u)$.
\end{proof}

\subsection{Boosted Error Functions}
We can now use the functions $E_s (\IM; u)$ and $M_s (\IM;u)$ we defined for Euclidean bilinear form in the previous section to spaces with arbitrary non-degenerate bilinear forms. In particular let $x \in \IR^n$ and let us define a signature $(r,n-r)$ bilinear form on this space by $B(x,y) = x^\tp A y$ (or by the associated quadratic form $Q(x) = x^\tp A x$). Here $r$ denotes the number of positive definite directions. We will define $E_r$ and $M_r$ functions using vectors $c_j \in \IR^n$ for $j=1,\ldots,s \leq r$ (represented as column vectors) which span a positive-definite subspace, in other words $C^\tp A C > 0$ where $C \equiv \lp c_1 \ldots c_r \rp$.

Let us introduce some notation before proceeding any further. Let $E \in \IR^{s \times n}$ be a matrix whose rows form an orthonormal basis for the plane spanned by $c_j$'s so that $E A E^\tp = I_s$ and $C = E^\tp E A C$. The projection of $x$ to the plane spanned by $c_j$'s will be denoted as $x^C_+ = E^\tp E A x = C (C^\tp A C)^{-1} C A x$. 

\begin{defn}
Let $A$ be a signature $(r,n-r)$ bilinear form and $C = (c_1 \ldots c_s)$ be an $n \times s$ matrix whose columns form a positive definite subspace according to this bilinear form. Further define a matrix $E \in \IR^{s \times n}$ whose rows define an orthonormal basis for the subspace spanned by $c_j$'s. then we define $E^A_s (C; x)$ and $M^A_s (C; x)$ as
\begin{equation}
E^A_s (C; x) = E_s (EAC; EAx) 
\andd
M^A_s (C; x) = M_s (EAC; EAx) .
\end{equation}
We will drop the superscript $A$ whenever the bilinear form is implied from the context and will drop the subscript $s$ when the number of vectors in $C$ can be inferred. 
\end{defn}
Note that these functions do not depend on the choice of $E$ since different choices correspond to a transformation $E \to QE$ where $Q \in \O (s, \IR)$ and that leaves $E^A_s (C; x)$ and $M^A_s (C; x)$ invariant by Proposition \ref{prop:Mr_basic_property}.

We also define $D = (d_1 \ldots d_s) \in \IR^{n \times s}$ whose columns form a dual basis to $C$ for the subspace $c_i$'s span. That is $D^\tp A C = I_s$ and $D = E^\tp E A D$ which can be easily verified for $D =E^\tp (EAC)^{- \tp}$. We also use $C_S$ for $S \sbst [s]$ to denote the matrix $C_S = \lp c_{j_1} \ c_{j_2} \ldots c_{j_{\abs{S}}} \rp$ where $j_1, j_2, \ldots, j_{\abs{S}} \in S$ and $j_1 < j_2 < \ldots < j_{\abs{S}}$. The matrix $D_S$ for the dual basis vectors is similarly defined. One last notation we will use is $C_{S \perp S'}$ denoting the projection of vectors in $C_S$ to the subspace orthogonal to the one spanned by $C_{S'}$.\footnote{In Section \ref{sec:indef_theta} we use the same notation also when the columns of $C_S$ span an indefinite signature subspace.} More concretely $C_{S \perp S'}$ will be formed by vectors $c_j - C_{S'} \lp C_{S'}^\tp A C_{S'} \rp^{-1} C_{S'}^\tp A \, c_j$ for $j \in S$ in increasing $j$ order though this choice will not be important. Now we can state the following propositions following from our work in Section \ref{sec:error_fnc}.

\begin{prop}\label{prop:E_boost_property}
\hfill
\begin{enumerate}[label=(\alph*)]
\item $E (C; x)$ is a $\mathcal{C}^\infty$ function of $x \in \IR^n$ for $C \in \IR^{n \times s}$ spanning a timelike subspace as described above. It is invariant under permutations of $c_j$, independent positive scalings of $c_j$ and is odd under independent sign flips of $c_j$'s.
\item If $C$ splits into two sets $C_1$ and $C_2$ which span orthogonal subspaces, then
\begin{equation}
E (C; x) = E (C_1;x) \,  E (C_2;x).
\end{equation}
\item As $\abs{B(c_j, x)} \to \infty$ for all $j$ we have $E (C; x) \to \sign{B(C,x)}$.
\item The function $E(C;x)$ satisfies the Vign\'eras equation for bilinear form $B(x,y)$ with $\l = 0$:
\begin{equation}
\lb B^{-1} (\del_x, \del_x ) + 2 \pi x^\tp \del_x \rb E(C;x) = 0
\end{equation}
where $ B^{-1}$ denotes $ B^{-1} (x, y ) = x^\tp A^{-1} y$. The shadow of $E(C;x)$ is 
\begin{equation}
\frac{i}{4} x \del_x E(C;x) = \frac{i}{2} \sum_{j=1}^{s} 
\frac{B(c_j, x)}{\sqrt{Q(c_j)}} \, e^{- \pi B(c_j,x)^2 / Q(c_j)} \, E(C_{[s]/\{j\} \perp \{j\} } ; x).
\end{equation}
\item The function $E(C;x)$ has an integral representation:
\begin{equation}
E(C;x) = \int\displaylimits_{\langle C \rangle}  \ddd^{s} x' \, e^{- \pi Q(x_+^C - x')} \,
\sign{B(C,x')}
\end{equation}
where the measure is normalized so that $\displaystyle\int\displaylimits_{\langle C \rangle}  \ddd^{s} x' \, e^{- \pi Q(x')} = 1$.
\end{enumerate}
\end{prop}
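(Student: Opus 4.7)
My plan is to reduce every assertion in the proposition to a corresponding statement about the Euclidean kernel $E_s(\IM;u)$ of Section \ref{sec:error_fnc} via the dictionary $\IM \leftrightarrow EAC$, $u \leftrightarrow EAx$. Before invoking those results I would first verify that $E^A_s(C;x) \equiv E_s(EAC;EAx)$ does not depend on the choice of orthonormal basis $E$: any other such basis has the form $\Lambda E$ with $\Lambda \in \O(s,\IR)$, which sends $(EAC,EAx) \mapsto (\Lambda\IM, \Lambda u)$, and Proposition \ref{prop:Mr_basic_property}(b) absorbs $\Lambda$.

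Parts (a)--(c) should then follow as direct corollaries. Proposition \ref{prop:Mr_basic_property}(a) supplies the equivariances under permutations, positive scalings, and sign flips of the columns of $C$. For (b) I would choose $E$ adapted to the orthogonal decomposition so that $EAC$ becomes block-diagonal and apply Proposition \ref{prop:Mr_basic_property}(c). For (c) the key identity is $(EAC)^\tp(EAx) = C^\tp A x = (B(c_j,x))_j$, which follows from $E^\tp E A C = C$; all components of the Euclidean argument $\IM^\tp u$ then simultaneously tend to $\pm\infty$ under the stated hypothesis, and the Remark after Proposition \ref{prop:M_E_shadow} delivers the claimed limit.

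For the Vign\'eras equation in (d), a short chain-rule computation under $u = EAx$ turns $x^\tp\del_x$ into $u^\tp\del_u$, while $B^{-1}(\del_x,\del_x) = \del_x^\tp A^{-1}\del_x$ collapses to $\sum_k \del_{u_k}^2$ once two $A$-factors cancel against $A^{-1}$ and the relation $EAE^\tp = I_s$ is used. The $B$-Vign\'eras operator applied to $E^A_s(C;x)$ then agrees with the Euclidean Vign\'eras operator applied to $E_s(EAC;u)$, which vanishes by Proposition \ref{prop_M_E_Vigneras}. The shadow formula will follow from Proposition \ref{prop:M_E_shadow} after recording $m^{(j)\tp}u = B(c_j,x)$, $\norm{m^{(j)}}^2 = Q(c_j)$, and $u^\tp Q_{\{j\}}^\tp Q_{\{j\}} u = B(c_j,x)^2/Q(c_j)$. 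The one step requiring care is to identify $E_{s-1}(P_{[s]/\{j\}}\IM_{[s]/\{j\}};P_{[s]/\{j\}} u)$ with $E^A_{s-1}(C_{[s]/\{j\}\perp\{j\}};x)$: I would do this by choosing $\wt E \equiv P_{[s]/\{j\}} E$, then check the orthonormality $\wt E A \wt E^\tp = P_{[s]/\{j\}} P_{[s]/\{j\}}^\tp = I_{s-1}$ together with $\wt E A c_j = P_{[s]/\{j\}} m^{(j)} = 0$, which force $\wt E$ to span the $B$-orthogonal complement of $c_j$ inside $\langle C\rangle$ and yield $\wt E A\, C_{[s]/\{j\}\perp\{j\}} = P_{[s]/\{j\}}\IM_{[s]/\{j\}}$ automatically.

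For (e), I would start from the Euclidean integral definition of $E_s(EAC;EAx)$ and substitute $u' = EAy'$ with $y' \in \langle C\rangle$. This is an isometry onto $\IR^s$ in the sense that $(EAx-u')^\tp(EAx-u') = Q(x_+^C - y')$ (using $EA(x-x_+^C)=0$ and $x_+^C - y' \in \langle C\rangle$) and $(EAC)^\tp u' = B(C,y')$, so the Lebesgue measure on $\IR^s$ pulls back to the Euclidean measure on $\langle C\rangle$ with exactly the Gaussian normalization stated. The main obstacle I anticipate across the whole argument is the projector bookkeeping in part (d); everything else reduces to standard chain rules and changes of variables.
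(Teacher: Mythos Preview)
Your proposal is correct and follows exactly the route the paper intends: the paper states this proposition without a separate proof, remarking only that it ``follow[s] from our work in Section~\ref{sec:error_fnc}'', and your dictionary $\IM = EAC$, $u = EAx$ together with the reductions to Propositions~\ref{prop:Mr_basic_property}, \ref{prop:M_E_shadow}, and~\ref{prop_M_E_Vigneras} is precisely that translation carried out in detail. One very minor point: the asymptotic remark you cite for part~(c) sits just before Lemma~\ref{prop:M_E_derivative}, not after Proposition~\ref{prop:M_E_shadow}.
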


\begin{prop}\label{prop:M_boost_property}
\hfill
\begin{enumerate}[label=(\alph*)]
\item $M (C; x)$ is a $\mathcal{C}^\infty$ function of $x \in \IR^n$ away from the loci $B(d_j, x) = 0$ for $C \in \IR^{n \times s}$ spanning a timelike subspace. It is invariant under permutations of $c_j$, independent positive scalings of $c_j$ and is odd under independent sign flips of $c_j$'s.
\item If $C$ splits into two sets $C_1$ and $C_2$ which span orthogonal subspaces, then
\begin{equation}
M (C; x) = M (C_1;x) \,  M (C_2;x).
\end{equation}
\item $\abs{M (C;x)} < (s!) \, e^{- \pi Q(x_+^C)}$.
\item The function $M(C;x)$ satisfies the Vign\'eras equation for bilinear form $B(x,y)$ with $\l = 0$.
\item The function $M(C;x)$ has an integral representation:
\begin{equation}
M(C;x) = \lp \frac{i}{\pi} \rp^{s} \lp \det C^\tp A C \rp^{-1}
\int\displaylimits_{\langle C \rangle - i x_+^C}  \ddd^{s} z \, 
\frac{e^{- \pi Q(z) - 2 \pi i B(x,z)}}{\prod \lb B(D,z) \rb}
\end{equation}
where the measure is normalized so that $\displaystyle\int\displaylimits_{\langle C \rangle}  \ddd^{s} x' \, e^{- \pi Q(x')} = 1$.
\end{enumerate}
\end{prop}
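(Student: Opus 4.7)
The strategy is to transfer each property from the Euclidean results of Section \ref{sec:error_fnc} via the substitution $\IM \mapsto EAC$ and $u \mapsto EAx$, exploiting the definition $M^A_s(C;x) = M_s(EAC; EAx)$. The key identities I will use throughout are $EAE^\tp = I_s$; $x_+^C = E^\tp EAx$ together with $Q(x_+^C) = (EAx)^\tp (EAx)$; $D^\tp A x = (EAC)^{-1} EAx$, which follows from $D = E^\tp (EAC)^{-\tp}$; and $\det((EAC)^\tp (EAC)) = \det(C^\tp A C)$, using that $E^\tp EA$ is the $B$-orthogonal projector onto $\langle C \rangle$ and fixes $C$. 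Independence from the particular choice of $E$ is already recorded after the definition, so I may assume any convenient $E$ in each part.

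For (a), the locus of smoothness $\prod(\IW^\tp u) \ne 0$ in Proposition \ref{prop:Mr_discontunity} translates, via the identity above, to $B(d_j,x) \ne 0$ for all $j$. The invariances under permutations, positive scalings, and sign flips of $c_j$ correspond to the same operations on the columns of $EAC$, so Proposition \ref{prop:Mr_basic_property}(a) applies directly. For (b), when $C_1^\tp A C_2 = 0$ I can select $E$ as a vertical concatenation $(E_1^\tp\ E_2^\tp)^\tp$ adapted to the orthogonal decomposition $\langle C \rangle = \langle C_1 \rangle \oplus \langle C_2 \rangle$; then $EAC$ is block-diagonal and Proposition \ref{prop:Mr_basic_property}(c) gives the factorization. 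For (c), applying Proposition \ref{prop_M_bound} to $M_s(EAC; EAx)$ yields $|M^A_s(C;x)| \le s!\,e^{-\pi(EAx)^\tp(EAx)} = s!\,e^{-\pi Q(x_+^C)}$.

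For (d), the chain rule gives $\partial_x M^A_s = (EA)^\tp (\partial_u M_s)|_{u = EAx}$, and the indefinite Laplacian collapses to its Euclidean counterpart: $B^{-1}(\partial_x, \partial_x) M^A_s = \partial_u^\tp (EA\,A^{-1}\,AE^\tp) \partial_u M_s = \sum_j \partial_{u_j}^2 M_s$; similarly $x^\tp \partial_x M^A_s = (EAx)^\tp \partial_u M_s = u^\tp \partial_u M_s$. The Vign\'eras operator for $B$ therefore pulls back exactly to the Euclidean one appearing in Proposition \ref{prop_M_E_Vigneras}, which closes the argument.

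For (e), I start from the integral in Definition \ref{def:Mr} for $M_s(EAC; EAx)$ and perform the change of variables $z = E^\tp \zeta$, which parametrizes $\langle C \rangle$ by $\IR^s$. Under this map the contour $\IR^s - iEAx$ lifts to $\langle C \rangle - i x_+^C$; $\zeta^\tp \zeta$ becomes $Q(z)$; $\zeta^\tp EAx$ becomes $B(x,z)$; and $(EAC)^{-1} \zeta$ becomes $D^\tp Az$, whose components are $B(D,z)$. The measure $d^s \zeta$ corresponds to the $Q$-normalized measure on $\langle C \rangle$, as verified by $\int e^{-\pi Q(z)}\,d^s z = \int e^{-\pi\zeta^\tp\zeta}\,d^s\zeta = 1$. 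The prefactor $|\det EAC|^{-1}$ is then expressed in terms of $\det(C^\tp A C)$ using the determinant identity above. The main obstacle is the careful bookkeeping of this last change of variables, particularly ensuring that the contour deformation in $\IR^n$ is consistent with the original $\IR^s$ contour, and that the measure and determinant factors line up precisely with the claimed normalization.
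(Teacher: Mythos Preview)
Your approach is correct and is precisely the paper's intended argument: the paper does not spell out a proof of this proposition, merely stating before Propositions~\ref{prop:E_boost_property}--\ref{prop:boosted_decomposition} that they follow from the Euclidean results of Section~\ref{sec:error_fnc}, and your proposal makes that transfer explicit item by item.

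One small point worth flagging in part (e): your determinant identity $\det\bigl((EAC)^\tp(EAC)\bigr)=\det(C^\tp A C)$ yields $\lvert\det EAC\rvert^{-1}=(\det C^\tp A C)^{-1/2}$, not $(\det C^\tp A C)^{-1}$ as printed in the proposition. Your change of variables is set up correctly (the map $\zeta\mapsto E^\tp\zeta$ is a $B$-isometry from $(\IR^s,\text{Eucl.})$ onto $(\langle C\rangle,Q)$, so the normalized measure matches $d^s\zeta$ with no extra Jacobian), so the discrepancy appears to be a typo in the stated formula rather than a flaw in your argument. You should record the exponent you actually obtain.
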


\begin{prop}\label{prop:boosted_decomposition}
On its domain of definition (that is $\prod \lb B(D,z) \rb \neq 0$) we have the decomposition
\begin{equation}
M (C;x) = \sum_{S \sbst [s]} (-1)^{s- \abs{S}} \, \sign{ B(D_{[s]/S},x) }  \, E(C_S; x).
\end{equation}
Similarly we have
\begin{equation}
E (C;x) = \sum_{S \sbst [s]} \sign{ B(C_{[s]/S \perp S},x) }  \, M(C_S; x)
\end{equation}
for any $x$ such that the $M$ function is well defined and the arguments of $\mathrm{sign}$ functions are nonzero.
\end{prop}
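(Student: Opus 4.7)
The plan is to reduce both identities to their Euclidean analogues, Propositions \ref{prop:E_decomposition_of_M} and \ref{prop:M_decomposition_of_E}, via the defining relations $E^A(C;x) = E_s(EAC;EAx)$ and $M^A(C;x) = M_s(EAC;EAx)$. Setting $\IM = EAC$ and $u = EAx$, so that $\IW = (EAC)^{-\tp}$, the task reduces to matching each sign factor and each sub-kernel on the right-hand sides of the Euclidean decompositions with the corresponding object in the boosted statements.

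For the sign factors, the first identity requires $\sign{\IW_{[s]/S}^\tp u} = \sign{B(D_{[s]/S},x)}$; since $D = E^\tp (EAC)^{-\tp}$, a direct computation gives $D^\tp A x = (EAC)^{-1} E A x = \IW^\tp u$, and this identification is immediate. For the second identity, one uses the orthogonal splitting $Q_S^\tp Q_S + P_{[s]/S}^\tp P_{[s]/S} = I_s$ together with $E^\tp E A c = c$ for $c \in \langle C\rangle$ to reduce $\IM_{[s]/S}^\tp P_{[s]/S}^\tp P_{[s]/S} u$ to $C_{[s]/S}^\tp A x - C_{[s]/S}^\tp A C_S(C_S^\tp A C_S)^{-1} C_S^\tp A x$, which is precisely $B(C_{[s]/S\perp S},x)$ by the defining formula for $C_{[s]/S\perp S}$.

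For the sub-kernels $E_{|S|}(Q_S \IM_S; Q_S u)$ and $M_{|S|}(Q_S \IM_S; Q_S u)$, the key observation is that $Q_S E$ furnishes another isometric embedding of $\langle C_S\rangle$ into $\IR^{|S|}$, just as $E_S$ does. Consequently the columns of $Q_S \IM_S$ and of $E_S A C_S$ share the Gram matrix $C_S^\tp A C_S$, while $Q_S u$ and $E_S A x$ encode the same abstract $B$-projection of $x$ onto $\langle C_S\rangle$ expressed in two different orthonormal bases. Hence the two matrix/vector pairs are related by a common $\Lambda \in \O(|S|,\IR)$, and the orthogonal invariance in Proposition \ref{prop:Mr_basic_property}(b) yields $E_{|S|}(Q_S \IM_S; Q_S u) = E^A(C_S;x)$ and $M_{|S|}(Q_S \IM_S; Q_S u) = M^A(C_S;x)$. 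Substituting all these matches into Propositions \ref{prop:E_decomposition_of_M} and \ref{prop:M_decomposition_of_E} directly produces the two claimed decompositions. The only subtlety is the bookkeeping between the various orthonormal bases $E$, $E_S$, and the Euclidean projectors $Q_S$, $P_S$; this is handled cleanly by the $\O$-invariance, which makes the final identities basis-independent.
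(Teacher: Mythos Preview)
Your proposal is correct and follows exactly the approach the paper intends: the paper does not spell out a proof for Proposition~\ref{prop:boosted_decomposition} but merely states that it (together with Propositions~\ref{prop:E_boost_property} and~\ref{prop:M_boost_property}) follows from the Euclidean results of Section~\ref{sec:error_fnc}, and your argument is a clean execution of that reduction via $\IM=EAC$, $u=EAx$. The verification that $Q_S E$ serves as a valid choice of $E_S$ (so that $E_{|S|}(Q_S\IM_S;Q_Su)=E^A(C_S;x)$ directly from the basis-independence of the boosted definition) is precisely the bookkeeping the paper suppresses.
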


\section{Indefinite Theta Functions of Higher Depth}\label{sec:indef_theta}
In this section we will construct a certain indefinite theta series and give sufficient conditions for its convergence. The holomorphic part of these series will be given by restricting the sum over lattice points through the function
\begin{equation}
\phi_r (x) = \frac{1}{2^r} \prod_{j=1}^r \lb 
\sign{B(c_j,x)} - \sign{B(c'_j,x)}
\rb .
\end{equation}
Before stating our result let us introduce some notation. By $C_{S^P}$ we will mean the matrix whose columns are taken from the set $\{ c_j \st j \in S \cap P \} \cup \{ c'_j \st j \in S/P \}$ in, say, increasing $j$ order (we will also use $C'$ for $C_{\rr^\emptyset}$ and $C^P$ for $C_{\rr^P}$). We also form the matrix $C_{S^P \perp T^Q}$ by which we mean the matrix formed by vectors in $C_{S^P}$ projected to the subspace orthogonal to the one spanned by the vectors in $C_{T^Q}$. Next we will use $\DD(x_1, \ldots, x_s)$ for the determinant of the Gram matrix for the vectors $x_1, \ldots, x_s$ and $D_{j_1, j_2}$ for the cofactor at $(j_1,j_2)$ position for the Gram matrix constructed from $\{ c_1, c'_1, \ldots, c_r, c'_r \}$ where we will use primes in the subscript to denote positions corresponding to vectors $c'_j$'s. Finally form the matrix $M$ from the cofactor matrix of the Gram matrix for $\lp c_1, c'_1, \ldots, c_r, c'_r \rp$ and by removing cofactors $D_{j,j'}$ and $D_{j',j}$ for all $j = 1, \ldots, r$. 

\begin{thm}\label{thm:indef_theta}
Let $C$ and $C'$ as described above be $2r$ vectors in $\IR^n$ endowed with an integral bilinear form $B(x,y)$ of signature $(r,n-r)$. Assume that each $C^P$ for $P \sbst \rr$ spans a signature $(r,0)$ (i.e. positive-definite) subspace. Further assume that $\DD \equiv \DD (C,C')$ satisfies $\DD (-1)^r > 0$ (signifying that $C \cup C'$ forms a linearly independent set and spans a signature $(r,r)$ subspace by our assumption above) and that $D_{j,j'} (-1)^r \geq 0$ for all $j = 1, \ldots, r$. Finally assume that the matrix $(-1)^r M$ as defined above is negative definite.

Then $\theta_\mu [\phi_r,0]$ is a convergent series and it is holomorphic in $\t$ and $z$ away from the loci where $B(k+b,c_j) = 0$ or $B(k+b,c'_j) = 0$ for some $j \in \rr$ and $k \in \LL + \mu + p/2$.

Moreover, assume that analogous conditions for vectors in $C$ and $C'$ we stated above also holds for $C_{\rr/S \perp S^P}$ and $C'_{\rr/S \perp S^P}$ for any $S \sbst \rr$ and $P \sbst S$. Then $\theta_\mu [\wh{\phi}_r,0]$ with the kernel
\begin{equation}
\wh{\phi}_r (x)= \frac{1}{2^r}  \sum_{P \sbst \rr} (-1)^{\abs{P}} \, E_r (C^P ; x)
\end{equation}
is a convergent series and forms a modular completion for $\theta_\mu [\phi_r,0]$ transforming like a (vector-valued) Jacobi form of weight $(n/2,0)$.
\end{thm}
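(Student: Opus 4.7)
The plan is to establish convergence of $\theta_\mu[\phi_r,0]$, then convergence of $\theta_\mu[\wh{\phi}_r,0]$, and finally modularity, unifying the two convergence arguments through a single cone-restricted Gaussian estimate. Modularity itself will follow from a direct application of Vign\'eras' Theorem \ref{thm:vigneras} once the required integrability is in hand.

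For $\theta_\mu[\phi_r,0]$ I would begin with the expansion
\begin{equation}
\phi_r(x) = \frac{1}{2^r}\sum_{P\sbst\rr}(-1)^{r-\abs{P}}\sign{B(C^P,x)},
\end{equation}
so that $\phi_r$ is a locally constant $\{-1,0,+1\}$-valued function supported on the double-cone where $B(c_j,x)$ and $B(c'_j,x)$ have opposite signs for every $j$. Using the positive-definiteness of each $\langle C^P\rangle$ I would write $k+b = x_+^{C^P}+x_-^{C^P}$ orthogonally and bound $-Q(k+b) = -Q(x_+^{C^P})-Q(x_-^{C^P})$ from above; the sign-restriction forces $\norm{x_+^{C^P}}$ to grow with $\norm{k+b}$, and the hypotheses $\DD(-1)^r>0$, $D_{j,j'}(-1)^r\geq 0$, and $(-1)^r M$ negative definite are precisely what is needed to conclude that $-Q$ is dominated by a negative-definite form on this cone. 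Absolute convergence of the lattice sum and termwise holomorphicity in $\t$ and $z$ away from the stated loci follow.

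For $\theta_\mu[\wh{\phi}_r,0]$ I would apply Proposition \ref{prop:boosted_decomposition} to every $E_r(C^P;x)$ in $\wh{\phi}_r$ and regroup the double sum over $(P,S)$ by setting $Q = P\cap S$ and $R = P\setminus S$. Since the $S$-indexed columns of $C^P$ form the matrix $C_{S^Q}$ and the orthogonal projection in the decomposition depends only on $Q$, the inner sum over $R\sbst\rr/S$ collapses into a product of sign-differences on the projected vectors, giving a factorization of the form
\begin{equation}
\wh{\phi}_r(x) = \sum_{S\sbst\rr}\sum_{Q\sbst S}\frac{(-1)^{\abs{Q}+r-\abs{S}}}{2^{\abs{S}}}\,M(C_{S^Q};x)\,\tilde{\phi}^{(S,Q)}_{r-\abs{S}}(x),
\end{equation}
where $\tilde{\phi}^{(S,Q)}_{r-\abs{S}}$ is the $\phi$-type kernel for the projected configuration $\{C_{\rr/S\perp S^Q},C'_{\rr/S\perp S^Q}\}$ in $\langle C_{S^Q}\rangle^\perp$. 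The bound $\abs{M(C_{S^Q};x)}\leq(\abs{S}!)\,e^{-\pi Q(x_+^{C_{S^Q}})}$ from Proposition \ref{prop:M_boost_property}(c) gives Gaussian decay along $\langle C_{S^Q}\rangle$, while the additional hypothesis of the theorem --- that the Step 1 Gram conditions hold also for $C_{\rr/S\perp S^Q}$ and $C'_{\rr/S\perp S^Q}$ --- is exactly what is needed to apply the Step 1 argument to $\tilde{\phi}^{(S,Q)}_{r-\abs{S}}$ on the orthogonal complement. Combining these two estimates yields absolute convergence.

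For modularity I would invoke Theorem \ref{thm:vigneras}. Proposition \ref{prop:E_boost_property}(d) and linearity give that $\wh{\phi}_r$ satisfies the Vign\'eras equation with $\l=0$, so the theta series transforms with Jacobi weight $(n/2,0)$; the $L^1\cap L^2$ integrability conditions on $\wh{\phi}_r(x)\,e^{\pi Q(x)/2}$ and its polynomial and second-order-derivative relatives follow from the factorized estimate of the previous step, using Lemma \ref{prop:M_E_derivative} to observe that differentiation produces only lower-rank $E$- and $M$-factors of the same form (so the cone-restricted Gaussian estimate is stable under differentiation). The main obstacle is the linear-algebraic claim in Step 1 that the Gram-matrix hypotheses force $-Q$ to be dominated by a negative-definite form on the $\sign$-restricted cone; I would tackle this by working in the basis dual to $C\cup C'$ and using Schur-complement identities to read the required positivity directly off the cofactors $D_{j,j'}$ and the matrix $M$, with the sign pattern $(-1)^r$ appearing uniformly in the hypotheses arising from the signature $(r,r)$ of the span of $C\cup C'$.
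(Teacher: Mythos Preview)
Your overall architecture matches the paper's proof: the regrouping of $\wh{\phi}_r$ via Proposition \ref{prop:boosted_decomposition} into $M(C_{S^Q};x)$ times a lower-depth $\phi$-type kernel on $\langle C_{S^Q}\rangle^\perp$, the use of the bound in Proposition \ref{prop:M_boost_property}(c) along $\langle C_{S^Q}\rangle$, the recursive appeal to the Step 1 estimate on the orthogonal complement, and the invocation of Theorem \ref{thm:vigneras} for modularity are all exactly what the paper does.

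The one place where you are vaguer than the paper is the cone estimate itself. Your first suggestion, to project onto a single $\langle C^P\rangle$ and argue that the sign restriction forces $\norm{x_+^{C^P}}$ to grow, does not work as stated: the support condition $\sign{B(c_j,x)}\sign{B(c'_j,x)}\le 0$ couples $c_j$ with $c'_j$, not with the other $c_k$'s in $C^P$, so a per-$C^P$ projection does not see it. Your fallback to the dual basis of $C\cup C'$ and Schur complements is in the right direction but still a sketch. The paper instead writes down the dominating negative-definite form in one line by expanding the Gram determinant $\DD(x,c_1,c'_1,\ldots,c_r,c'_r)$ along its first row and column:
\[
\DD(x,C,C') \;=\; \DD\, Q_-(x) \;-\; X^\tp M X,
\qquad
Q_-(x) \;\equiv\; Q(x) - \frac{2}{\DD}\sum_{j=1}^r D_{j,j'}\,B(c_j,x)\,B(c'_j,x),
\]
with $X^\tp = \bigl(B(c_1,x),B(c'_1,x),\ldots,B(c_r,x),B(c'_r,x)\bigr)$. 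The left-hand side has sign $(-1)^{r+1}$ (or vanishes) by signature of $\langle x,C,C'\rangle$, and since $(-1)^r M$ is negative definite and $(-1)^r\DD>0$ this forces $Q_-(x)<0$ for $x\neq 0$. On the support of $\phi_r$ each product $B(c_j,x)B(c'_j,x)$ is $\le 0$ while $D_{j,j'}/\DD\ge 0$, so $Q_-(x)\ge Q(x)$ there, giving $\abs{\phi_r(x)}\,e^{\pi Q(x)/2}\le e^{\pi Q_-(x)/2}$. This makes the role of every hypothesis transparent and avoids any Schur-complement bookkeeping.
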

\begin{proof}
We follow and generalize the proofs in \cite{zwegers2008mock} and \cite{Alexandrov:2016enp}. The first thing to note is that for any $x \in \IR^n$ we have
\begin{equation}
\DD (x, c_1, c'_1, \ldots, c_r, c'_r) = \DD \lb Q(x)  - 2 \frac{\sum_{j=1}^r  D_{j,j'} B(c_j,x) B(c'_j,x)  }{\DD} \rb - X^\tp M X
\end{equation}
where $X^\tp = \lp B(c_1,x) \ B(c'_1,x) \ldots  B(c_r,x) \ B(c'_r,x) \rp$. We define the part in brackets as $Q_- (x)$:
\begin{equation}
Q_- (x) \equiv Q(x)  - 2 \frac{\sum_{j=1}^r  D_{j,j'} B(c_j,x) B(c'_j,x)  }{\DD}. 
\end{equation}
Now we note that if $x$ is linearly independent from $C \cup C'$ the subspace $\langle x, c_1, c'_1, \ldots, c_r, c'_r \rangle$ has signature $(r,r+1)$ and hence 
\begin{equation}
(-1)^r \DD (x, c_1, c'_1, \ldots, c_r, c'_r)  = (-1)^r \DD Q_- (x) - X^\tp \lb (-1)^r M \rb X < 0.
\end{equation}
 Using the negative definiteness of $(-1)^r M$ and positivity of $(-1)^r \DD$ we conclude $Q_- (x) < 0$. On the other hand, if $x$ is in the plane spanned by $C \cup C'$ we have $\DD (x, c_1,  \ldots, c'_r) = 0$ and hence $Q_- (x) < 0$ unless $x=0$ again arguing through negative definiteness of $(-1)^r M$ and positivity of $(-1)^r \DD$. Now, $\phi_r (x) \neq 0$ only when $\sign{B(c_j,x)} \,  \sign{B(c'_j,x)} \leq 0$ for all $j = 1, \ldots, r$. The assumptions $D_{j,j'} (-1)^r \geq 0$ and $\DD (-1)^r > 0$ implies $Q_- (x) \geq Q(x)$, i.e. on the support of $\phi_r (x)$ the negative definite bilinear form $Q_- (x)$ dominates $Q(x)$. Using this we can conclude that $\phi_r (x) e^{\pi Q(x) /2 } \leq e^{\pi Q_-(x) /2 }$ proving the absolute convergence of $\theta_\mu [\phi_r,0]$.
 
 For the second part of the theorem we use the decomposition in Proposition \ref{prop:boosted_decomposition} to rewrite the kernel  $\wh{\phi}_r (x)$ as
 \begin{equation}
 \wh{\phi}_r (x) = \frac{1}{2^{\abs{S}}} \sum_{S \sbst \rr} \sum_{P \sbst S} (-1)^{\abs{P}}
 \, M(C_{S^P};x) \,  \frac{1}{2^{r - \abs{S}}} \sum_{Q \sbst \rr/S} (-1)^{\abs{Q}} \, \sign{B(C_{\rr/S^Q \perp S^P},x)}.
 \end{equation}
 Let us focus on each $S \in \rr$ and $P \sbst S$ contribution
 \begin{equation}
  \phi_{S^P} (x) \equiv M(C_{S^P};x) \,  \lp \frac{1}{2^{r - \abs{S}}} \sum_{Q \sbst \rr/S} (-1)^{\abs{Q}} \, \sign{B(C_{\rr/S^Q \perp S^P},x)} \rp
\end{equation}  
separately. We start by decomposing each $x \in \IR^n$ as $x = x_1 + x_2$ where $x_1$ is in the linear span of $C_{S^P}$ and $x_2$ is in its orthogonal complement so that $Q(x) = Q(x_1) + Q(x_2)$. That divides $\phi_{S^P} (x) \, e^{\pi Q(x)/2}$ into a factor along $\langle C_{S^P} \rangle^\perp$:
\begin{equation}
\lp \frac{1}{2^{r - \abs{S}}} \sum_{Q \sbst \rr/S} (-1)^{\abs{Q}} \, \sign{B(C_{\rr/S^Q \perp S^P},x_2)} \rp e^{\pi Q(x_2)/2}
\end{equation}
and a factor along $\langle C_{S^P} \rangle$:
\begin{equation}
M(C_{S^P};x_1) \, e^{\pi Q(x_1)/2} .
\end{equation}
By our argument in the first part and by the hypothesis given for $C_{\rr/S \perp S^P} \cup C'_{\rr/S \perp S^P}$ the factor
\begin{equation}
\lp \frac{1}{2^{r - \abs{S}}} \sum_{Q \sbst \rr/S} (-1)^{\abs{Q}} \, \sign{B(C_{\rr/S^Q \perp S^P},x)} \rp e^{\pi Q(x_2)/2}
\end{equation}
is dominated along $\langle C_{S^P} \rangle^\perp$
by $e^{\pi Q^{S^{P\perp}}_- (x_2) /2}$ where $Q^{S^{P\perp}}_-$ is a negative definite bilinear form on $\langle C_{S^P} \rangle^\perp$ and correspondingly by Proposition \ref{prop:M_boost_property} 
\begin{equation}
\abs{M(C_{S^P};x)} e^{\pi Q(x_1)/2} \leq \abs{S}! \,  e^{-\pi Q(x_1)/2}
\end{equation}
 and this contribution is exponentially suppressed along $\langle C_{S^P} \rangle$. That shows the series in $\theta_\mu [\wh{\phi}_r,0]$ is convergent and that $\wh{\phi}_r (x) e^{\pi Q(x)/2}$ satisfies the conditions given in theorem \ref{thm:vigneras}.  Moreover, since $E(C^P;x)$ functions each satisfies Vign\'eras equation with $\l=0$ (see Proposition \ref{prop:E_boost_property}) by Vign\'eras' theorem \ref{thm:vigneras}, $\theta_\mu [\wh{\phi}_r,0]$ transforms like a (vector-valued) Jacobi form of weight $(n/2,0)$.
\end{proof}

\begin{rem}
It is desirable to further relax and simplify the conditions we put on $C$. See \cite{kudla2016theta} and \cite{raum} for further discussion.
\end{rem}

Aside from the obvious factorizable solutions to the hypothesis we put for $C$ and $C'$ we will exhibit a non-factorizable example for $r=4$ case.

\textbf{Example.} Consider signature $(4,4)$ integral bilinear form
\begin{equation}
A = \mat{G(A_4) & - I_4 \\ - I_4 & 0}
\end{equation}
where $G(A_4)$ denotes the Gram matrix for the $A_4$ root lattice:
\begin{equation}
G(A_4) = \mat{2 & -1 & 0 & 0 \\ -1 & 2 & -1 & 0 \\ 0 &  -1 & 2 & -1 \\ 0 &  0 &  -1 & 2}.
\end{equation}
Then the vectors
\begin{equation}
c_1 = \pmat{1 \\ 0 \\ 0 \\ 0 \\ 0 \\ 0 \\ 0 \\ 0 }, \quad
c_2 = \pmat{0 \\ 1 \\ 0 \\ 0 \\ 0 \\ 0 \\ 0 \\ 0 }, \quad
c_3 = \pmat{0 \\ 0 \\ 1 \\ 0 \\ 0 \\ 0 \\ 0 \\ 0 }, \quad
c_4 = \pmat{0 \\ 0 \\ 0 \\ 1 \\ 0 \\ 0 \\ 0 \\ 0 }
\end{equation}
and
\begin{equation}
c'_1 = \pmat{1 \\ 0 \\ 0 \\ 0 \\ 0 \\ -1 \\ 0 \\ 0 }, \quad
c'_2 = \pmat{0 \\ 1 \\ 0 \\ 0 \\ 0 \\ 0 \\ -1 \\ 0 }, \quad
c'_3 = \pmat{0 \\ 0 \\ 1 \\ 0 \\ 0 \\ 0 \\ 0 \\ -1 }, \quad
c'_4 = \pmat{0 \\ 0 \\ 0 \\ 1 \\ -1 \\ 0 \\ 0 \\ 0 }
\end{equation}
satisfy the hypotheses of  theorem \ref{thm:indef_theta}.

\section{Discussion}\label{sec:Discussion}
In this work we studied the properties of $r$-tuple error functions and introduced indefinite theta series using these functions. One obvious question is to relax the conditions we imposed on $c_j$'s and $c'_{j}$'s that determine the subset of lattice points used in the holomorphic part of the associated theta series and ensure its convergence. Specifically, one would want to allow null vectors and allow linear dependencies, which is essential to extend the range of applications for indefinite theta functions. The other two constructions for mock modular forms given by \cite{zwegers2008mock}, namely using Appell-Lerch sums \cite{zwegers2010multivariable} and meromorphic Jacobi forms \cite{Dabholkar:2012nd}, are closely related to signature $(1,n-1)$ indefinite theta series. It is then natural to look for similar corresponding constructions for signature $(r,n-r)$ indefinite theta functions. On the side of Appell-Lerch sums one such generalization is already available in literature under the name `generalized Appell functions' \cites{Manschot:2014cca,bringmann2015identities}. Indeed, \cite{Alexandrov:2016enp} initiated the study of their modular properties for the $r=2$ case. To study the complete story it is then desirable to study the null limits of the construction we have given for higher $r$ cases.

\appendix
\section{}\label{appendixA}
In this section we are going to prove the Lemma \ref{lem:signature_lemma} which we restate here for reference.

\begin{lem}
For any $n \times n$ real positive definite matrix $G$ and any $v \in \IR^{n \times 1}$ such that 
$\displaystyle\prod_{S \sbst [n] } \lp \prd  \lb \mat{- G_{S,S}^{-1} & 0 \\ - G_{[n]/S,S}  \, G_{S,S}^{-1} & I_{n-\abs{S}}}   \mat{v_S \\ v_{[n]/S}} \rb \rp  \neq 0$ we have
\begin{equation}
\sum_{S \sbst [n] } \signb{  \mat{- G_{S,S}^{-1} & 0 \\ - G_{[n]/S,S}  \, G_{S,S}^{-1} & I_{n-\abs{S}}}   \mat{v_S \\ v_{[n]/S}}  } = 0.
\end{equation}
\end{lem}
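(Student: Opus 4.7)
The plan is to show that $F(v) \equiv \sum_{S \sbst [n]} \sign{T_S v}$, where $T_S \equiv \mat{-G_{S,S}^{-1} & 0 \\ -G_{[n]/S,S} G_{S,S}^{-1} & I_{n-\abs{S}}}$ denotes the matrix appearing in the lemma, is locally constant on the open set $U$ where the non-degeneracy hypothesis holds, and then to evaluate it at one convenient point. Each $s_S(v) \equiv \sign{T_S v} = \prod_{j=1}^n \sign{(T_S v)_j}$ is piecewise constant on $\IR^n$ and jumps only across the singular hyperplanes $H^S_j \equiv \{v : (T_S v)_j = 0\}$, so the task reduces to showing that the jumps pair up and cancel across every such hyperplane.

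The central computation is a Schur complement identity. For $T \sbst [n]$ and $j \notin T$, write $G_{T \cup \{j\}, T \cup \{j\}} = \mat{G_{T,T} & g \\ g^\tp & G_{jj}}$ with $g = G_{T, \{j\}}$ and Schur complement $s \equiv G_{jj} - g^\tp G_{T,T}^{-1} g$, which is positive by positive-definiteness of $G$. Expanding $G_{T \cup \{j\}, T \cup \{j\}}^{-1}$ by the block-inverse formula yields $(T_{T \cup \{j\}} v)_j = -s^{-1} (T_T v)_j$, identifying $H^T_j$ with $H^{T \cup \{j\}}_j$ and showing that the $j$-th components of $T_T v$ and $T_{T \cup \{j\}} v$ always have opposite sign; moreover for $i \neq j$ the difference $(T_{T \cup \{j\}} v)_i - (T_T v)_i$ turns out to be a scalar multiple of $(T_T v)_j$. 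Consequently in a punctured neighborhood of $H^T_j$ the signs of the $i \neq j$ components of $T_T v$ and $T_{T \cup \{j\}} v$ coincide while the $j$-th components are opposite, forcing $s_{T \cup \{j\}}(v) = -s_T(v)$; hence the pair $s_T + s_{T \cup \{j\}}$ vanishes identically there and is, in particular, continuous across $H^T_j$.

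A normal-vector argument next verifies that no other $s_S$ jumps across $H^T_j$: the covector defining $H^T_j$ has support exactly $T \cup \{j\}$ with coefficient $1$ on $e_j$, and matching this against the covectors defining other singular hyperplanes together with the non-vanishing of the relevant entries of $G^{-1}$ forces $(S,k) \in \{(T,j), (T \cup \{j\}, j)\}$. Thus $F$ extends continuously across every codimension-$1$ singular stratum and is locally constant on the complement of the codimension-$\geq 2$ strata, which is path-connected in $\IR^n$.

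It remains to evaluate $F$ at one point. Taking $G = I$ and $v = (1, 2, \ldots, n)^\tp$ gives $(T_S v)_j = -j$ for $j \in S$ and $(T_S v)_j = j$ for $j \notin S$, so $s_S = (-1)^{\abs{S}}$ and $F = \sum_{S \sbst [n]} (-1)^{\abs{S}} = 0$. Running the same analysis jointly in $(G,v)$ shows that $F$ is integer-valued and continuous on the complement of the codimension-$\geq 2$ coincidence stratum in $\{\text{positive definite } G\} \times \IR^n$, which is path-connected; hence the computed value $0$ propagates to every positive-definite $G$. The main obstacle I anticipate is making the normal-vector uniqueness step fully rigorous for non-generic $G$, which I expect is cleanly handled precisely by this joint integer-valued continuity argument rather than by direct case analysis.
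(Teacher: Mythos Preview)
Your argument is correct, and it shares with the paper's proof the core Schur-complement computation relating $(T_T v)_j$ and $(T_{T\cup\{j\}}v)_j$ (the paper writes the positive Schur complement as $k$ where you write $s$). The divergence is in how the two proofs exploit that computation. The paper, having established that the full sum is unchanged when crossing each wall $v_j - G_{j,S}G_{S,S}^{-1}v_S=0$, fixes $j=n$ and pushes $v_n\to+\infty$ so that every factor $\sign{v_n-G_{n,S}G_{S,S}^{-1}v_S}$ becomes $+1$; what remains is the difference of two sums of the same $(n-1)$-dimensional form, one built from $G_{[n-1],[n-1]}$ and $v_{[n-1]}$, the other from the Schur complement $\tilde G=G_{[n-1],[n-1]}-G_{n,n}^{-1}G_{[n-1],n}G_{n,[n-1]}$ and the shifted vector $\tilde v=v_{[n-1]}-(v_n/G_{n,n})G_{[n-1],n}$, and induction on $n$ finishes. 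Your route instead packages the wall-crossing invariance as a global constancy statement on the connected complement of the codimension-$\ge 2$ coincidence locus in $(G,v)$-space and then evaluates once at $(I,(1,\dots,n))$. This is a bit slicker in that it avoids the inductive bookkeeping and the auxiliary $\tilde G,\tilde v$, at the cost of the mild topological input that removing finitely many codimension-$\ge 2$ subvarieties from a connected open set keeps it connected. Your instinct about the normal-vector step is also right: for specific non-generic $G$ several walls $H^S_k$ can coincide and the single-pair cancellation then needs more care, but working jointly in $(G,v)$ pushes all such degeneracies into codimension $\ge 2$ and sidesteps the issue cleanly.
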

\begin{proof}
We will use induction on $n$. The base hypothesis easily follows from the positivity of $G_{1,1}$. For the inductive step let us note the following facts first. There are $2^n \, n$ $\mathrm{sign}$ functions in our sum. We are going to show that there are generically $2^{n-1} \, n$ independent ones that each appear twice and that discontinuities cancel among each pair. In particular, we consider the contribution to the sum above from subsets $S$ and $\Si$ for some $j \in [n]$ and $S \sbst [n] / \{ j \}$ and single out the contribution from the row corresponding to $v_j$.

\begin{itemize}
\item The contribution from $S$ reads (using $\ws$ for $[n]/(\Si)$):
\begin{align}
&\signb{\mat{-G_{S,S}^{-1}  & 0  & 0 \\  
-G_{j,S} \, G_{S,S}^{-1} & 1 & 0  \\
-G_{\ws,S} \, G_{S,S}^{-1} & 0 & I_{\abs{\ws}}
} \mat{v_S \\ v_j \\ v_\ws}}
\notag \\
&\qquad =
\sign{v_j - G_{j,S} \, G_{S,S}^{-1} v_S} \ 
\signb{\mat{-G_{S,S}^{-1}  & 0   \\ 
-G_{\ws,S} \, G_{S,S}^{-1} & I_{\abs{\ws}}
} \mat{v_S \\ v_\ws}}.
\end{align}

\item For the contribution from $\Si$ first note that 
\begin{equation}
\mat{G_{S,S}  & G_{S,j}  \\ G_{j,S}  & G_{j,j}}^{-1}
= \mat{
\lp G_{S,S} - \frac{1}{G_{j,j}} G_{S,j} \, G_{j,S}  \rp^{-1}   & - \frac{1}{k} G_{S,S}^{-1} \, G_{S,j}  \\
- \frac{1}{k}  G_{j,S} \, G_{S,S}^{-1}  & \frac{1}{k}
}
\end{equation}
where $k = G_{j,j} - G_{j,S} \, G_{S,S}^{-1} \, G_{S,j}$ and 
\begin{equation}
\lp G_{S,S} - \frac{1}{G_{j,j}} G_{S,j} \, G_{j,S}  \rp^{-1} = G_{S,S}^{-1} + \frac{1}{k} G_{S,S}^{-1} \, G_{S,j} \, G_{j,S} \, 
G_{S,S}^{-1}.
\end{equation}
By the assumption that $G$ is positive definite we have $k>0$. We can rewrite the $\Si$ contribution
\begin{equation}
\signb{\mat{-G_{\Si,\Si}^{-1}  & 0   \\ 
-G_{\ws,\Si} G_{\Si,\Si}^{-1} & I_{\abs{\ws}}
} \mat{v_\Si \\ v_\ws}}
\end{equation}
as
\begin{align}
& \signb{
\pmat{
- \lp G_{S,S} - \frac{1}{G_{j,j}} G_{S,j} G_{j,S}  \rp^{-1} & \frac{1}{k} G_{S,S}^{-1} G_{S,j}  & 0\\
\frac{1}{k}  G_{j,S} G_{S,S}^{-1}  & - \frac{1}{k} & 0\\
-G_{\ws,S} \lp G_{S,S} - \frac{1}{G_{j,j}} G_{S,j} G_{j,S}  \rp^{-1} 
+ \frac{1}{k} G_{\ws,j} G_{j,S}  G_{S,S}^{-1} \ \ 
& \frac{1}{k}  G_{\ws,S} G_{S,S}^{-1} G_{S,j} - \frac{1}{k}G_{\ws,j} \  & 
I_{\abs{\ws}}
}
\mat{v_S \\ v_j \\ v_\ws}}  =  \\
& 
- \sign{v_j - G_{j,S} G_{S,S}^{-1} v_S} \ 
\signb{\pmat{-\lp G_{S,S} - \frac{1}{G_{j,j}} G_{S,j} G_{j,S}  \rp^{-1}  & 0   \\ 
- \lp G_{\ws,S} - \frac{1}{G_{j,j}} G_{\ws,j} G_{j,S}  \rp
\lp G_{S,S} - \frac{1}{G_{j,j}} G_{S,j} G_{j,S}  \rp^{-1} & I_{\abs{\ws}}
} \mat{\wt{v}_S \\ \wt{v}_\ws}}  \notag
\end{align}
where we defined $\displaystyle \wt{v}_S = v_S - \frac{v_j}{G_{j,j}} G_{S,j}$.

\end{itemize}
Next we are going to show that possible discontinuities due to $\sign{v_j - G_{j,S} \, G_{S,S}^{-1} v_S}$ terms do cancel between these two contributions. For this we note that at 
$v_j =  G_{j,S} \, G_{S,S}^{-1} v_S$ we have
\begin{align}
&- \lp G_{S,S} - \frac{1}{G_{j,j}} G_{S,j} G_{j,S}  \rp^{-1} \wt{v}_S 
\notag \\
&\qquad= - \lp G_{S,S}^{-1} + \frac{1}{k} G_{S,S}^{-1} \, G_{S,j} G_{j,S} \, G_{S,S}^{-1} \rp
\lp v_S - \frac{1}{G_{j,j}} G_{S,j} G_{j,S} \,  G_{S,S}^{-1} \, v_S \rp   \notag  \\
&\qquad= - G_{S,S}^{-1} v_S
\end{align}
and 
\begin{align}
- &\lp G_{\ws,S} - \frac{1}{G_{j,j}} G_{\ws,j} G_{j,S}  \rp
\lp G_{S,S} - \frac{1}{G_{j,j}} G_{S,j} G_{j,S}  \rp^{-1} \wt{v}_S  + \wt{v}_\ws \notag \\
&\qquad = - \lp G_{\ws,S} - \frac{1}{G_{j,j}} G_{\ws,j} G_{j,S}  \rp
G_{S,S}^{-1} \, v_S  + \lp v_\ws -  \frac{1}{G_{j,j}} G_{\ws,j} G_{j,S} \, G_{S,S}^{-1} \, v_S \rp \notag\\
&\qquad = - G_{\ws,S} \, G_{S,S}^{-1}\, v_S  +  v_\ws.
\end{align}
So at  $\displaystyle \prd  \lb \mat{- G_{S,S}^{-1} & 0 \\ - G_{\ws,S}  G_{S,S}^{-1} & I_{\abs{\ws}}}   \mat{v_S \\ v_{\ws}} \rb  \neq 0$ (ensured by the hypothesis) the sum
\begin{equation}\label{eq:sign_sum}
\sum_{S \sbst [n] } \sign{  \mat{- G_{S,S}^{-1} & 0 \\ - G_{[n]/S,S}  \, G_{S,S}^{-1} & I_{n-\abs{S}}}   \mat{v_S \\ v_{[n]/S}}  } 
\end{equation}
is equal on both sides of $v_j - G_{j,S} \, G_{S,S}^{-1} \, v_S = 0$. The argument generalizes for all the $\mathrm{sign}$ functions in the sum. 

Let us now specialize to $j=n$ (the choice of $j=n$ is not important), use $\ws = [n-1]/S$ and rewrite the sum in \eqref{eq:sign_sum} as
\begin{align}
\sum_{S \sbst [n-1] } &
\lb
 \signb{  \mat{- G_{S,S}^{-1} & 0 \\ - G_{\ws,S}  \, 
 G_{S,S}^{-1} \  & I_{\abs{\ws}}}   \mat{v_S \\ v_{\ws}}  } 
 -
  \signb{  \mat{- \wt{G}_{S,S}^{-1}  & 0 \\ - \wt{G}_{\ws,S} \,  \wt{G}_{S,S}^{-1}\  & I_{\abs{\ws}}}   \mat{\wt{v}_S \\ \wt{v}_{\ws}}  } 
 \rb   \notag \\
 & \qquad \times \sign{v_n - G_{n,S} \, G_{S,S}^{-1} \, v_S}
\end{align}
where 
$\wt{v}_S = v_S - \frac{v_n}{G_{n,n}} G_{S,n}$
and $\wt{G}$ is a positive definite matrix defined by
\begin{equation}
\wt{G}_{[n-1],[n-1]} \equiv G_{[n-1],[n-1]} - \frac{1}{G_{n,n}} G_{[n-1],n} G_{n,[n-1]}.
\end{equation}

Now for any $v$ satisfying the hypothesis, we start increasing $v_n$ while keeping $v_{[n-1]}$ fixed until $v_n - G_{n,S} \, G_{S,S}^{-1} \, v_S > 0$ for all $S \sbst [n-1]$ and $v$ satisfies the hypothesis of the lemma. The value of our sum does not change across any of the possible discontinuities by our argument above. The fact that the sum over $S \sbst [n-1] $ is zero by the induction hypothesis then proves the statement of the lemma.

\end{proof}


\end{document}